\theoremstyle{plain}
\newtheorem{proposition}{Proposition}
\newtheorem{theorem}[proposition]{Theorem}
\newtheorem{lemma}[proposition]{Lemma}
\newtheorem{corollary}[proposition]{Corollary}
\theoremstyle{definition}
\theoremstyle{definition}
\newtheorem{remark}[proposition]{Remark}
\numberwithin{equation}{section}
\numberwithin{proposition}{section}
\gdef\myletter{}
\let\savetheequation\theequation
\def\theequation{\savetheequation\myletter}
\newcommand{\CC}{{\mathbb C}}
\newcommand{\RR}{{\mathbb R}}
\newcommand{\PP}{{\mathbb P}}
\renewcommand{\Re}{\mbox{Re}}
\renewcommand{\date}{\today}
\def \bar{\overline}
\begin{document}

\vskip 3mm

\title[Random polynomials and pluripotential ...]{\bf Random polynomials and pluripotential-theoretic extremal functions }

\author{T. Bloom* and N. Levenberg}{\thanks{*Supported in part by an NSERC of Canada grant}}

\address{University of Toronto, Toronto, Ontario M5S 2E4 Canada}  
\email{bloom@math.toronto.edu}

\address{Indiana University, Bloomington, IN 47405 USA}

\email{nlevenbe@indiana.edu}

\begin{abstract}
There is a natural pluripotential-theoretic extremal function $V_{K,Q}$ associated to a closed subset $K$ of $\CC^m$ and a real-valued, continuous function $Q$ on $K$. We define random polynomials $H_n$ whose coefficients with respect to a related orthonormal basis are independent, identically distributed complex-valued random variables having a very general distribution (which includes both normalized complex and real Gaussian distributions) and we prove results on a.s. convergence of a sequence $\frac{1}{n}\log |H_n|$ pointwise and in $L^1_{loc}(\CC^m)$ to $V_{K,Q}$. In addition we obtain results on a.s. convergence of a sequence of normalized zero currents $dd^c\bigl(\frac{1}{n}\log |H_n|\bigr)$ to $dd^cV_{K,Q}$ as well as asymptotics of expectations of these currents. All these results extend to random polynomial mappings and to a more general setting of positive holomorphic line bundles over a compact K\"ahler manifold.

\end{abstract}

\maketitle

\section{\bf Introduction.} \label{sec:prelim} In many probabilistic settings, one introduces {\it randomness} by considering independent, identically distributed (i.i.d.) complex-valued random variables having complex Gaussian distribution functions (appropriately normalized). We consider more general complex-valued random variables having distribution $\phi(z)dm_2(z)$ where $dm_2$ denotes Lebesgue measure on $\RR^2=\CC$. Thus $\int_{\CC}\phi(z)dm_2(z)=1$. In our setting, these random variables will be coefficients with respect to an appropriate basis of {\it random polynomials in $\CC^m$}. For such $\phi$ that are uniformly bounded on $\CC$ and whose tail probabilities $\int_{|z|\geq R}\phi(z)dm_2(z)$ decay sufficiently rapidly as $R\to \infty$, we show that almost surely (a.s.) one recovers a pluripotential-theoretic extremal function from a sequence of random polynomials (Theorem \ref{point}) and a.s. the zeros of a sequence of random polynomials converge as currents to a current formed from the potential-theoretic extremal function (Theorem \ref{l1}). We also prove, under only the tail probability hypothesis on $\phi$, a result on asymptotics of expectations  of normalized zero currents associated to random polynomials (Theorem \ref{keythm}).

We provide versions of Theorems \ref{point} and \ref{l1} in the case of polynomial mappings (Theorem \ref{pointmap}) and holomorphic line bundles over K\"ahler manifolds (Theorem \ref{kahler}) as well as a version where $K$ is unbounded (section \ref{sec:unb}). Some of these results are new even in the case of Gaussian coefficients
while some are ``universality results," i.e., they extend results known in the Gaussian case to quite general probability distributions (see also \cite{KZ}). Specifically, Theorems \ref{point} and \ref{l1} were known in the one-dimensional case (\cite{Bloom} and \cite{Bloom2}) and Corollaries \ref{expzero} and \ref{expzero1} in the Gaussian case (\cite{BS} and \cite{Bloom}). Theorem 5.2 is in \cite{SZ} in the Gaussian case. A result in the Gaussian case related to Theorem \ref{point} is in \cite{Bloom}.

 Our assumptions on $\phi$ include, in particular, the case where $\phi(z)=\phi(\Re z)$ is a {\it real} Gaussian. This latter situation was analyzed numerically by Marc Van Barel \cite{MVB} for specific extremal functions in $\CC^2$. 

The contents of the paper begin with general probabilistic preliminaries in the next section.  In section \ref{sec:pluriintro} we provide background in pluripotential theory. Section \ref{sec:efarp} utilizes these preliminaries, along with a deterministic result on limiting behavior of Bergman reproducing kernels, to give extremal function asymptotics for random polynomials. Similar analysis yields extremal function asymptotics for random polynomial mappings and sections of holomorphic line bundles over a compact K\"ahler manifold. We extend these results to unbounded sets with super-logarithmic weights in section \ref{sec:unb}. As an application in this setting, we consider random Weyl polynomials in $\CC$ and we show that, appropriately scaled, their zeros converge to normalized Lebesgue measure on the unit disk (see also \cite{KZ}). Finally, in section \ref{sec:zed}, we prove our results on asymptotics of expectations of normalized zero currents.

The first author would like to thank Andrew Stewart of the University of Toronto for helpful discussions and the authors would like to thank C. Bordenave for bringing \cite{KZ} to our attention.

\section{\bf Probabilistic preliminaries.} \label{sec:intro} We begin with a complex-valued random variable having distribution $\phi(z)dm_2(z)$ where $dm_2$ denotes Lebesgue measure on $\RR^2=\CC$. Thus $\int_{\CC}\phi(z)dm_2(z)=1$. We consider the following assumptions on $\phi$: for some $T>0$, we have
\begin{equation}\label{hyp1} |\phi(z)|\leq T \ \hbox{for all} \ z\in \CC;\end{equation}
\begin{equation}\label{hyp2} |\int_{|z|\geq R}\phi(z)dm_2(z)|\leq T/R^2 \ \hbox{for all} \ R \ \hbox{sufficiently large}.\end{equation}
If $\phi$ is real-valued, we replace $\phi(z)dm_2(z)$ by $\phi(x)dm_1(x)$ where $dm_1$ denotes Lebesgue measure on $\RR$ in (\ref{hyp1}) and (\ref{hyp2}) (and below). These hypotheses (see (1.10) in \cite{Bloom}) are very weak; for a (real or) complex Gaussian random variable with mean zero and standard deviation one, one has a tail-end estimate in (\ref{hyp2}) like $0(e^{-R^2})$. 

Next, let $Prob_n$ denote the probability measure on $\CC^n$ given by the $n-$fold product of these distributions on $\CC$; i.e., for $G\subset \CC^n$,
$$Prob_n(G):=\int_G \phi(z_1)\cdots \phi(z_n)dm_2(z_1)\cdots dm_2(z_n).$$
Finally, let $\mathcal C:= \otimes_{n=1}^{\infty} (\CC^n,Prob_n)$ be the product probability space.

We will utilize repeatedly the classical Borel-Cantelli lemma.
\begin{lemma}\label{bc} Let $\{E_n\}\subset \mathcal F$ be a sequence of events on some probability space $(\Omega, \mathcal F, \Pr)$. If the sum of the probabilities of the $E_n$ is finite, i.e.,

       $$ \sum_{n=1}^\infty \Pr(E_n)<\infty,$$
then the probability that infinitely many of them occur is $0$:

      $$ \Pr\left(\limsup_{n\to\infty} E_n\right)= \Pr\left(\bigcap_{n=1}^{\infty} \bigcup_{k=n}^{\infty} E_k\right)= 0.$$
\end{lemma}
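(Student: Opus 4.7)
The plan is the standard measure-theoretic argument, built on countable subadditivity and monotone continuity of the probability measure. Set
\[
F_n := \bigcup_{k=n}^{\infty} E_k,
\]
so that by definition $\limsup_{n\to\infty} E_n = \bigcap_{n=1}^{\infty} F_n$. The sequence $\{F_n\}$ is decreasing ($F_1 \supseteq F_2 \supseteq \cdots$), and since $\Pr$ is a finite measure, continuity from above applies: $\Pr\bigl(\bigcap_{n=1}^{\infty} F_n\bigr) = \lim_{n\to\infty} \Pr(F_n)$.

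Next, I would estimate each $\Pr(F_n)$ using countable subadditivity: $\Pr(F_n) = \Pr\bigl(\bigcup_{k=n}^{\infty} E_k\bigr) \leq \sum_{k=n}^{\infty} \Pr(E_k)$. The hypothesis $\sum_{n=1}^\infty \Pr(E_n) < \infty$ then forces the tails $\sum_{k=n}^{\infty} \Pr(E_k)$ to vanish as $n\to\infty$ (they are the tails of a convergent series of nonnegative terms). Combining these two observations gives $\Pr(\limsup E_n) = \lim_{n\to\infty} \Pr(F_n) \leq \lim_{n\to\infty} \sum_{k=n}^\infty \Pr(E_k) = 0$.

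There is honestly no real obstacle here: the lemma is purely measure-theoretic and follows immediately from the two standard properties of $\Pr$ (subadditivity and continuity from above on decreasing sequences), both of which are automatic on any probability space $(\Omega, \mathcal F, \Pr)$. The only subtlety worth flagging is the appeal to continuity from above, which requires $\Pr(F_1) < \infty$; this is free in the probability setting since $\Pr(F_1) \leq 1$.
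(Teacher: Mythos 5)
Your proof is correct and complete: the decomposition into the decreasing sets $F_n=\bigcup_{k\geq n}E_k$, continuity from above, countable subadditivity, and the vanishing of the tails of a convergent series is exactly the standard argument for the first Borel--Cantelli lemma. The paper itself offers no proof (it cites the lemma as classical), so there is nothing to compare against; your argument is the canonical one and nothing is missing.
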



\noindent Note if $E_n'$ denotes the complement of the event $E_n$, then the conclusion is also the probability that all but finitely many of the events $\{E_n\}$ do {\it not} occur is $1$:
\begin{equation}\label{bcuse}\Pr\left(\liminf_{n\to\infty} E_n'\right) =\Pr \left(\bigcup_{n=1}^{\infty} \bigcap_{k=n}^{\infty} E_k'\right)= 1.
\end{equation}

Let $\{ w^{(n)}:=(w_{n1},...,w_{nn})\}_{n=1,2,...}$ be a sequence of non-zero vectors $w^{(n)}\in \CC^n$. We write $<\cdot,\cdot>$ for the Hermitian inner product on $\CC^n$ and $|| \cdot ||$ for the Euclidean norm. The appropriate dimension $n$ should be understood from the context. 

\begin{lemma} \label{useful2} For $\phi$ satisfying (\ref{hyp1}), let $$\mathcal A:=\{ \{a^{(n)}:=(a_{n1},...,a_{nn})\}_{n=1,2,...}\in \mathcal C: \frac{|<a^{(n)},w^{(n)}>|}{||w^{(n)}||}\geq 1/n^2$$
$$\hbox{for}  \ n \ \hbox{sufficiently large}\}.$$
Then $\mathcal A$ is of probability one in $\mathcal C$.

\end{lemma}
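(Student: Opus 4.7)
The plan is to apply Borel--Cantelli (Lemma \ref{bc}) to the complementary events
$$E_n := \Bigl\{ \tfrac{|\langle a^{(n)}, w^{(n)}\rangle|}{\|w^{(n)}\|} < 1/n^2 \Bigr\},$$
using (\ref{bcuse}). So I would aim to show $\sum_n \Pr(E_n) < \infty$, by producing a small-ball probability bound in which the density bound $T$ from (\ref{hyp1}) is the only probabilistic input.

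After normalizing, set $b_k := \overline{w_{nk}}/\|w^{(n)}\|$, so that $\sum_{k=1}^n |b_k|^2 = 1$ and $\langle a^{(n)}, w^{(n)}\rangle/\|w^{(n)}\| = \sum_{k=1}^n a_{nk} b_k$. Pick an index $k^\ast$ with $|b_{k^\ast}| = \max_k |b_k|$; the pigeonhole bound $\sum |b_k|^2 = 1$ gives $|b_{k^\ast}| \geq 1/\sqrt{n}$. The key idea is to condition on $\{a_{nk} : k \neq k^\ast\}$: on that event, the sum $\sum_k a_{nk} b_k$ equals $a_{nk^\ast} b_{k^\ast} + c$ for a constant $c$ depending only on the other coordinates, and independence of the $a_{nk}$ means we only need to estimate $\Pr\bigl(|a_{nk^\ast} b_{k^\ast} + c| < 1/n^2\bigr)$.

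In the complex case, $a_{nk^\ast}$ has density $\phi$ bounded by $T$, so $a_{nk^\ast} b_{k^\ast}$ has density bounded by $T/|b_{k^\ast}|^2 \leq Tn$ on $\CC$; hence
$$\Pr\bigl(|a_{nk^\ast} b_{k^\ast} + c| < 1/n^2\bigr) \leq Tn \cdot \pi/n^4 = \pi T/n^3.$$
In the real case, $a_{nk^\ast}$ is real with density bounded by $T$ on $\RR$, and the set $\{t \in \RR : |t\,b_{k^\ast} + c| < 1/n^2\}$ is an interval of length at most $2/(n^2|b_{k^\ast}|) \leq 2\sqrt{n}/n^2$, so $\Pr(E_n \mid \text{others}) \leq 2T/n^{3/2}$. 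Either way, integrating out the conditioning yields $\Pr(E_n) \leq C/n^{3/2}$, which is summable, and Borel--Cantelli gives $\Pr(\bigcap_N \bigcup_{n\geq N} E_n)=0$, i.e.\ $\mathcal A$ has probability one.

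No serious obstacle is expected; the only subtlety is the two-case treatment (real versus complex $\phi$) and the use of the maximum coordinate $|b_{k^\ast}| \geq 1/\sqrt{n}$ to convert the density bound (\ref{hyp1}) into a small-ball estimate that beats $1/n$. Note that (\ref{hyp2}) plays no role here; only the uniform boundedness of $\phi$ is needed.
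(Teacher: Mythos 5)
Your proof is correct and follows essentially the same route as the paper: both isolate a coordinate with $|w_{nk}|\geq 1/\sqrt{n}$ by pigeonhole and use the density bound (\ref{hyp1}) to get a small-ball estimate of order $\pi T/n^3$ (your conditioning on the other coordinates is just the probabilistic phrasing of the paper's linear change of variables plus Fubini), followed by Borel--Cantelli via (\ref{bcuse}). Your explicit treatment of the real case, where the bound degrades to $O(n^{-3/2})$ but remains summable, is a detail the paper leaves implicit.
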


\begin{proof} By rescaling we may assume $||w^{(n)}||=1$. We consider 
$$Prob_n\{ a^{(n)}\in \CC^n: |<a^{(n)},w^{(n)}>|\leq 1/n^2\}$$
\begin{equation}\label{integral}=\int_{|<a^{(n)},w^{(n)}>|\leq 1/n^2} \phi(a_{n1})\cdots \phi(a_{nn})dm_2(a_{n1})\cdots dm_2(a_{nn}).\end{equation}
We may assume $|w_{n1}|\geq 1/\sqrt n$ and we make the complex-linear change of coordinates on $\CC^n$ given by:
$$\alpha_1:=a_{n1}w_{n1}+\cdots + a_{nn}w_{nn}, \ \alpha_2=a_{n2}, \cdots , \alpha_n=a_{nn}.$$
Then (\ref{integral}) becomes
$$\int_{\CC^{n-1}}\int_{|\alpha_1|\leq 1/n^2} \frac{1}{|w_{n1}|^2}\phi(\frac{\alpha_1-\alpha_2w_{n2}-\cdots - \alpha_n w_{nn}}{w_{n1}})\phi(\alpha_2)\cdots \phi(\alpha_n)$$
$$dm_2(\alpha_1)\cdots dm_2(\alpha_n).$$
Using (\ref{hyp1}) this is bounded above by
$$n|\int_{|\alpha_1|\leq 1/n^2} Tdm_2(\alpha_1)|\leq \pi T/n^3.$$
The result follows from Lemma \ref{bc}, see (\ref{bcuse}).

\end{proof}

\begin{remark} We note that the set $\mathcal A$ depends on the sequence $\{w^{(n)}\}$; but for {\it each} $\{w^{(n)}\}$, the corresponding set $\mathcal A=\mathcal A(\{w^{(n)}\})$ is of probability one in $\mathcal C$. 

\end{remark}

\begin{lemma} \label{useful3} For $\phi$ satisfying (\ref{hyp2}), let $$\mathcal A':=\{ \{a^{(n)}:=(a_{n1},...,a_{nn})\}_{n=1,2,...}\in \mathcal C: \frac{|<a^{(n)},w^{(n)}>|}{||w^{(n)}||}\leq n^2$$
$$\hbox{for}  \ n \ \hbox{sufficiently large}\}.$$
Then $\mathcal A'$ is of probability one in $\mathcal C$.

\end{lemma}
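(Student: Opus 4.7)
The plan is to apply Borel-Cantelli in the same spirit as in Lemma \ref{useful2}, but now using the tail decay hypothesis (\ref{hyp2}) to control large values of the inner product. First I would rescale so that $||w^{(n)}||=1$, and let $E_n$ denote the event $\{|\langle a^{(n)},w^{(n)}\rangle|>n^2\}$. The goal is to show $\sum_n Prob_n(E_n)<\infty$ and then invoke Lemma \ref{bc}.

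The key observation is that the inner product is controlled coordinatewise: by Cauchy--Schwarz, $\sum_{j=1}^n|w_{nj}|\leq \sqrt{n}\,||w^{(n)}||=\sqrt{n}$, so if $\max_j|a_{nj}|\leq n^{3/2}$ then
\[
|\langle a^{(n)},w^{(n)}\rangle|\leq \max_j|a_{nj}|\cdot\sum_{j=1}^n|w_{nj}|\leq n^{3/2}\cdot\sqrt{n}=n^2.
\]
Consequently $E_n\subset \bigcup_{j=1}^n\{|a_{nj}|>n^{3/2}\}$. Applying a union bound and using hypothesis (\ref{hyp2}) with $R=n^{3/2}$ yields, for $n$ sufficiently large,
\[
Prob_n(E_n)\leq n\cdot\int_{|z|\geq n^{3/2}}\phi(z)\,dm_2(z)\leq n\cdot\frac{T}{n^3}=\frac{T}{n^2},
\]
which is summable in $n$. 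The conclusion then follows from Lemma \ref{bc} in the complementary form (\ref{bcuse}).

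I expect no serious obstacle: the exponent $n^2$ in the statement and the $R^{-2}$ decay in (\ref{hyp2}) are perfectly matched against the Cauchy--Schwarz factor $\sqrt{n}$, so the simple truncation $|a_{nj}|\leq n^{3/2}$ does the job. The only subtlety worth flagging is that, because (\ref{hyp2}) gives no control on the second moment of $\phi$, one cannot prove the lemma via a direct $L^2$ estimate on $\langle a^{(n)},w^{(n)}\rangle$; the coordinatewise truncation argument above is what makes the weak tail hypothesis sufficient.
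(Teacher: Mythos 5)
Your proof is correct and is essentially the paper's argument: both reduce the event $|\langle a^{(n)},w^{(n)}\rangle|>n^2$ (after normalizing $\|w^{(n)}\|=1$) to the coordinatewise event that some $|a_{nj}|\geq n^{3/2}$, apply a union bound with hypothesis (\ref{hyp2}) to get the summable bound $T/n^2$, and conclude via Borel--Cantelli in the form (\ref{bcuse}). The only cosmetic difference is that the paper passes through $|\langle a^{(n)},w^{(n)}\rangle|\leq \|a^{(n)}\|$ and then notes $\|a^{(n)}\|\geq n^2$ forces some $|a_{nj}|\geq n^{3/2}$, whereas you use the $\ell^\infty$--$\ell^1$ bound directly; both land on the identical estimate.
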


\begin{proof} By rescaling we may again assume $||w^{(n)}||=1$. Then
$$|<a^{(n)},w^{(n)}>|\leq ||a^{(n)}||\cdot ||w^{(n)}|| =||a^{(n)}||.$$
We have
$$Prob_n\{a^{(n)}\in \CC^n: ||a^{(n)}||\geq n^2\}=Prob_n\{a^{(n)}\in \CC^n: \sum_{j=1}^n|a_{nj}|^2\geq n^4\}$$
$$\leq Prob_n\{a^{(n)}\in \CC^n: |a_{nj}|\geq n^{3/2} \ \hbox{for some} \ j=1,...,n\}$$
$$=n Prob_n\{a^{(n)}\in \CC^n: |a_{n1}|\geq n^{3/2}\} \leq n\frac{T}{n^3}=\frac{T}{n^2}$$
by (\ref{hyp2}). The result again follows from Lemma \ref{bc}, see (\ref{bcuse}).

\end{proof}

\begin{remark} This time, from the proof we note that for {\it each} $\{w^{(n)}\}$, the corresponding set $\mathcal A'=\mathcal A'(\{w^{(n)}\})$ contains the same set $S$ of probability one in $\mathcal C$.  \end{remark}

Combining Lemma \ref{useful2} for (\ref{li1}) below and Lemma \ref{useful3} for (\ref{ls1}), we have:

\begin{corollary} For $\phi$ satisfying (\ref{hyp1}) and (\ref{hyp2}), with probability one in $\mathcal C$,
\begin{equation}\label{ls1}\limsup_{n\to \infty} \frac{1}{n}\log |<a^{(n)},w^{(n)}>|\leq \limsup_{n\to \infty} \frac{1}{n}\log ||w^{(n)}||\end{equation}
for all $\{w^{(n)}\}$. For each $\{w^{(n)}\}$, 
\begin{equation}\label{li1}\liminf_{n\to \infty} \frac{1}{n}\log |<a^{(n)},w^{(n)}>|\geq \liminf_{n\to \infty} \frac{1}{n}\log ||w^{(n)}||\end{equation}
with probability one in $\mathcal C$; i.e., for each $\{w^{(n)}\}$, the set 
$$\{  \{a^{(n)}:=(a_{n1},...,a_{nn})\}_{n=1,2,...}\in \mathcal C:(\ref{li1}) \ \hbox{holds}\}$$
depends on $\{w^{(n)}\}$ but is always of probability one.

\end{corollary}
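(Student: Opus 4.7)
The plan is to obtain both inequalities directly by taking $\frac{1}{n}\log$ of the bounds supplied by Lemmas \ref{useful2} and \ref{useful3}, and then tracking carefully where the ``for each sequence'' versus ``for all sequences'' quantifier lands.

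For the $\limsup$ inequality (\ref{ls1}), I would invoke Lemma \ref{useful3}. Its proof does not involve any choice of $\{w^{(n)}\}$: the bound $||a^{(n)}||\leq n^2$ (eventually in $n$) is an intrinsic statement about the sample sequence $\{a^{(n)}\}$, and by Borel--Cantelli it holds on a single set $S\subset \mathcal C$ of probability one. On $S$, for every sequence $\{w^{(n)}\}$ of nonzero vectors, Cauchy--Schwarz gives
$$\frac{|<a^{(n)},w^{(n)}>|}{||w^{(n)}||}\leq ||a^{(n)}||\leq n^2$$
for all $n$ sufficiently large, and so
$$\frac{1}{n}\log |<a^{(n)},w^{(n)}>|\leq \frac{1}{n}\log ||w^{(n)}||+\frac{2\log n}{n}.$$
Taking $\limsup_{n\to\infty}$ of both sides and noting $\tfrac{2\log n}{n}\to 0$ yields (\ref{ls1}) on $S$ simultaneously for all $\{w^{(n)}\}$.

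For the $\liminf$ inequality (\ref{li1}), I would fix a sequence $\{w^{(n)}\}$ and apply Lemma \ref{useful2}, which produces a set $\mathcal A=\mathcal A(\{w^{(n)}\})$ of probability one on which
$$\frac{|<a^{(n)},w^{(n)}>|}{||w^{(n)}||}\geq \frac{1}{n^2}$$
for all large $n$. Taking $\frac{1}{n}\log$ gives
$$\frac{1}{n}\log |<a^{(n)},w^{(n)}>|\geq \frac{1}{n}\log ||w^{(n)}||-\frac{2\log n}{n},$$
and passing to $\liminf$ yields (\ref{li1}) on $\mathcal A$.

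There is no real obstacle here; the work is all in the two preceding lemmas. The only subtle point worth emphasizing is the asymmetry in the quantifiers: the set $S$ from Lemma \ref{useful3} is defined purely in terms of the norms $||a^{(n)}||$ and therefore serves for every $\{w^{(n)}\}$ at once, which is why (\ref{ls1}) can be asserted ``for all $\{w^{(n)}\}$'' with probability one; whereas the set $\mathcal A$ from Lemma \ref{useful2} is built from the specific linear functional $w^{(n)}$ via the change of variables that assumes $|w_{n1}|\geq 1/\sqrt n$, so (\ref{li1}) is only claimed sequence-by-sequence.
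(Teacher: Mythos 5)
Your proposal is correct and follows exactly the route the paper intends: the paper's own ``proof'' is just the one-line remark that (\ref{ls1}) follows from Lemma \ref{useful3} and (\ref{li1}) from Lemma \ref{useful2}, after taking $\frac{1}{n}\log$ and using $\frac{2\log n}{n}\to 0$. Your observation about the quantifier asymmetry likewise matches the remarks the paper attaches to those two lemmas, so there is nothing to add.
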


We will need a version of the corollary for an appropriate subsequence $\{m(n)\}$ of the positive integers, but with the same factor $\frac{1}{n}$ in the estimates. We now let 
$$\mathcal C:= \otimes_{n=1}^{\infty} (\CC^{m(n)},Prob_{m(n)}).$$

\begin{corollary} \label{useful} For $\phi$ satisfying (\ref{hyp1}) and (\ref{hyp2}), with probability one in $\mathcal C$, if $\{m(n)\}$ is a sequence of positive integers with $m(n)=0(n^M)$ for some $M$, then 
\begin{equation}\label{limsupeqn} \limsup_{n\to \infty} \frac{1}{n}\log |<a^{(m(n))},w^{(m(n))}>|\leq \limsup_{n\to \infty} \frac{1}{n}\log ||w^{(m(n))}||\end{equation}
for all $\{w^{(m(n))}\}$. For each $\{w^{(m(n))}\}$, 
\begin{equation}\label{liminfeqn}\liminf_{n\to \infty} \frac{1}{n}\log |<a^{(m(n))},w^{(m(n))}>|\geq \liminf_{n\to \infty} \frac{1}{n}\log ||w^{(m(n))}||\end{equation}
with probability one in $\mathcal C$; i.e., for each $\{w^{(m(n))}\}$, the set 
$$\{  \{a^{(m(n))}:=(a_{m(n)1},...,a_{m(n)m(n)})\}_{n=1,2,...}\in \mathcal C:(\ref{liminfeqn}) \ \hbox{holds}\}$$
depends on $\{w^{(m(n))}\}$ but is always of probability one.

\end{corollary}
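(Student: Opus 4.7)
The plan is to re-run the proofs of Lemmas \ref{useful2} and \ref{useful3} almost verbatim, but with the thresholds $1/n^2$ and $n^2$ replaced by $1/n^{M+2}$ and $n^{M+2}$ respectively. The polynomial growth hypothesis $m(n)=O(n^M)$ lets these enlarged thresholds absorb the extra factors of $m(n)$ that arise in the probability estimates, while still being mild enough that $\frac{1}{n}\log(n^{M+2})\to 0$ keeps the desired limsup/liminf inequalities intact.

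For \eqref{liminfeqn}, I rescale so that $\|w^{(m(n))}\|=1$ and aim to show that, with probability one, $|\langle a^{(m(n))},w^{(m(n))}\rangle|\geq 1/n^{M+2}$ for all $n$ sufficiently large; this gives $\liminf_n \frac{1}{n}\log|\langle a^{(m(n))},w^{(m(n))}\rangle|\geq 0$, which is what is needed. Choosing an index $j$ with $|w_{m(n),j}|^2 \geq 1/m(n)$ and making the same complex-linear change of coordinates as in Lemma \ref{useful2}, the probability of the bad event is bounded (using \eqref{hyp1}) by
\[
\frac{1}{|w_{m(n),j}|^2}\cdot T\cdot \frac{\pi}{n^{2(M+2)}} \;\leq\; \frac{\pi T\, m(n)}{n^{2M+4}} \;=\; O\!\left(n^{-M-4}\right),
\]
which is summable; Borel-Cantelli (via \eqref{bcuse}) finishes this half.

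For \eqref{limsupeqn}, I bound $|\langle a^{(m(n))},w^{(m(n))}\rangle|\leq \|a^{(m(n))}\|$ (after normalizing $\|w^{(m(n))}\|=1$) and show $\|a^{(m(n))}\|\leq n^{M+2}$ eventually with probability one. As in Lemma \ref{useful3}, $\|a^{(m(n))}\|\geq n^{M+2}$ forces $|a_{m(n),j}|\geq n^{M+2}/\sqrt{m(n)}$ for some $j$, and by \eqref{hyp2} and a union bound,
\[
\Pr\{\|a^{(m(n))}\|\geq n^{M+2}\} \;\leq\; m(n)\cdot \frac{T\,m(n)}{n^{2(M+2)}} \;=\; O(n^{-4}),
\]
again summable. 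Since the event $\{\|a^{(m(n))}\|\leq n^{M+2} \text{ eventually}\}$ makes no reference to $\{w^{(m(n))}\}$, the probability-one set obtained is uniform in the choice of test sequences, explaining the asymmetry in the quantifier placement between \eqref{limsupeqn} and \eqref{liminfeqn}.

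There is no real obstacle beyond bookkeeping: the only delicate point is to verify that a \emph{single} choice of exponent (here $M+2$) can be made large enough that the Borel-Cantelli series converges in both estimates, yet remains polynomial in $n$ so that the factor $\frac{1}{n}\log(\cdot)$ still kills it. The hypothesis $m(n)=O(n^M)$ is exactly what allows such a uniform choice.
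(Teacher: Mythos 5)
Your proof is correct and follows the same Borel--Cantelli strategy as the paper, whose own proof simply invokes Lemmas \ref{useful2} and \ref{useful3} along the subsequence (thresholds $m(n)^{\pm 2}$) and notes that $\frac{1}{n}\log n^{2M}\to 0$. Your only deviation is bookkeeping: you re-run the two lemma proofs with thresholds $n^{\pm(M+2)}$ indexed by $n$ rather than by $m(n)$, which makes the summability of the Borel--Cantelli series explicit and is, if anything, slightly more careful than the paper's one-line justification.
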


\noindent Here we simply use the fact that $\frac{1}{n}\log {n^{2M}}\to 0$ as $n\to \infty$.

For future use, we note a corollary of the proof of Lemma \ref{useful3}.

\begin{corollary} \label{forlater} For $\phi$ satisfying  (\ref{hyp2}), 
$$Prob_n\{a^{(m(n))}\in \CC^{(m(n))}: ||a^{(m(n))}||\geq n^k\}\leq T\bigl(\frac{m(n)}{n^k}\bigr)^2.$$
\end{corollary}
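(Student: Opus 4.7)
The plan is to mimic the proof of Lemma \ref{useful3} essentially verbatim, replacing the threshold $n^2$ there by $n^k$ and the dimension $n$ by $m(n)$. The only ingredient beyond (\ref{hyp2}) is the elementary observation that if a sum of nonnegative numbers exceeds a bound, then at least one of the summands must exceed that bound divided by the number of terms.

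More precisely, first I would rewrite the event $\{\|a^{(m(n))}\| \geq n^k\}$ as $\{\sum_{j=1}^{m(n)} |a_{m(n)j}|^2 \geq n^{2k}\}$. By the pigeonhole observation, this event is contained in the event that $|a_{m(n)j}|^2 \geq n^{2k}/m(n)$ for some index $j \in \{1,\dots,m(n)\}$, equivalently $|a_{m(n)j}| \geq n^k/\sqrt{m(n)}$ for some $j$. A union bound over the $m(n)$ indices, combined with the fact that the coordinates $a_{m(n)j}$ are i.i.d., yields
$$Prob_n\{\|a^{(m(n))}\| \geq n^k\} \leq m(n)\, Prob_n\{|a_{m(n)1)}| \geq n^k/\sqrt{m(n)}\}.$$

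Next I would apply the tail hypothesis (\ref{hyp2}) with $R := n^k/\sqrt{m(n)}$. Since $m(n) = O(n^M)$, the quantity $R$ tends to infinity with $n$, so for all sufficiently large $n$ the hypothesis gives $Prob_n\{|a_{m(n)1}| \geq R\} \leq T/R^2 = T\, m(n)/n^{2k}$. Substituting this in produces
$$Prob_n\{\|a^{(m(n))}\| \geq n^k\} \leq m(n) \cdot \frac{T\, m(n)}{n^{2k}} = T\Bigl(\frac{m(n)}{n^k}\Bigr)^2,$$
which is the desired bound.

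There is no real obstacle here — the argument is a direct parametrization of the proof of Lemma \ref{useful3}. The only mild point to check is that $R = n^k/\sqrt{m(n)}$ is large enough to invoke (\ref{hyp2}), which is automatic from $m(n) = O(n^M)$ once $n$ is large; for any finitely many small $n$ the stated inequality can be absorbed by enlarging the constant $T$ if necessary (or one simply reads the statement as holding for $n$ sufficiently large, which is all that is needed in the subsequent applications).
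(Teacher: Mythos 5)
Your argument is correct and is exactly the proof the paper intends: the paper gives no separate argument, stating only that the corollary follows from the proof of Lemma \ref{useful3}, and your proof is precisely that proof with the dimension $n$ replaced by $m(n)$ and the threshold $n^2$ replaced by $n^k$ (pigeonhole on the sum of squares, union bound over the i.i.d.\ coordinates, then the tail estimate (\ref{hyp2}) at $R=n^k/\sqrt{m(n)}$). Your closing caveat is the right reading of the statement: $R\to\infty$ is not literally automatic from $m(n)=O(n^M)$ alone unless $k>M/2$, but in the paper's only application (Theorem \ref{keythm}) one has $m(n)=m_n=O(n^m)$ and $k\geq m+1$, so $R\to\infty$ and (\ref{hyp2}) is applicable for all $n$ sufficiently large, which is all that is needed.
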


\section{\bf Pluripotential preliminaries.} \label{sec:pluriintro} 

A set $E\subset \CC^m$ is {\it pluripolar} if there exists a plurisubharmonic function $u\not \equiv -\infty$ with $E\subset \{z:u(z)=-\infty\}$. Pluripolar sets have $\RR^{2m}-$Lebesgue measure zero.
Our setting in this section is as follows: $K$ is a nonpluripolar compact set in $\CC^m$, $Q$ is a real-valued  continuous, function on $K$  and $\tau$ is a positive Borel measure on $K$ such that the triple $(K,Q,\tau)$ satisfies a weighted Bernstein-Markov property:
\begin{equation} \label{wtdbm} ||pe^{-nQ}||_K:=\max_{z\in K}|p(z)|e^{-nQ(z)}\leq M_n ||pe^{-nQ}||_{L^2(\tau)}\end{equation}
for all polynomials $p\in \mathcal P_n$; i.e., of degree at most $n$, where $n=1,2,...$ and 
$$\limsup_{n\to \infty} M_n^{1/n}=1.$$
If $Q\equiv 0$ we say the pair $(K,\tau)$ satisfies a Bernstein-Markov property. Note that 
$$||pe^{-nQ}||^2_{L^2(\tau)}=\int_K |p|^2e^{-2nQ}d\tau =||p||^2_{L^2(e^{-2nQ}\tau)}.$$
If $K$ is the closure of a smoothly bounded domain in $\RR^m$ or $\CC^m$ then Lebesgue measure satisfies the weighted Bernstein-Markov property for any $Q$. For a fuller discussion of the Bernstein-Markov property, see \cite{BL}, section 3.

In section \ref{sec:unb} we will consider closed but possibly {\it unbounded} sets $K$ and appropriate modifications of the hypotheses on $Q$ and the weighted Bernstein-Markov property (\ref{wtdbm}). 

We define the weighted pluricomplex Green function $V^*_{K
,Q}(z):=\limsup_{\zeta \to z}V_{K
,Q}(\zeta)$ where
\begin{equation}\label{vkq1} V_{K,Q}(z):=\sup \{\frac{1}{deg(p)}\log |p(z)|: p\in \cup_n \mathcal P_n, \ ||pe^{-nQ}||_K\leq 1\}\end{equation}
\begin{equation}\label{vkq2}=\sup \{u(z):u\in L(\CC^m), \ u\leq Q \ \hbox{on } K\}.\end{equation}
Here,  $L(\CC^m)$ is the set of all plurisubharmonic (psh) functions $u$ on $\CC^m$ with the property that $u(z) - \log |z|$ is bounded above as $|z| \to \infty$. If $Q\equiv 0$ we simply write $V_K$ and $V_K^*$. For $K$ nonpluripolar, $V^*_{K,Q}\in L(\CC^m)$ and 
$$\{z\in \CC^m: V_{K,Q}(z) < V_{K,Q}^*(z)  \}$$
is pluripolar. For example, the (unweighted) pluricomplex Green function for the $m-$torus 
$$(S^1)^m:=\{(z_1,...,z_m)\in \CC^m: |z_j|=1, \ j=1,...,m\}$$
is 
\begin{equation}\label{vtm} V_{(S^1)^m}(z)=\max_{j=1,...,m} \log^+|z_j| \end{equation}
where $\log^+|z_j|=\max[0,\log |z_j|]$.

Let $\nu$ be an $m-$multiindex and let $\{p_{\nu}^{(n)}\}_{|\nu|\leq n}$ be a set of orthonormal polynomials of degree at most $n$ in $L^2(e^{-2nQ}\tau)$ gotten by applying the Gram-Schmidt process to (a lexicographical ordering of) the monomials $\{z^{\nu}\}_{|\nu|\leq n}$. For each $n=1,2,...$ consider the corresponding Bergman kernel
$$S_n(z,\zeta):=\sum_{|\nu|\leq n} p_{\nu}^{(n)}(z)\overline{p_{\nu}^{(n)}(\zeta )}$$
and the restriction to the diagonal
\begin{equation}\label{snfcn}S_n(z,z)=\sum_{|\nu|\leq n} |p_{\nu}^{(n)}(z)|^2.\end{equation}
By the reasoning in \cite{BS}, Lemma 3.4 (or \cite{Bloom}, Lemma 2.3), we have the following.
\begin{proposition}\label{snfcnprop} Let $K\subset \CC^m$ be compact with $Q$ a real-valued, continuous function on $K$. Let $\tau$ be a positive Borel measure on $K$ such that $(K,Q,\tau)$ satisfies (\ref{wtdbm}). Then with $S_n(z,z)$ defined in (\ref{snfcn}), 
\begin{equation} \label{bmasym} \lim_{n\to \infty} \frac{1}{2n}\log S_n(z,z) = V_{K,Q}(z)\end{equation}
pointwise on $\CC^m$. If $V_{K,Q}$ is continuous, the convergence is uniform on compact subsets of $\CC^m$. 
\end{proposition}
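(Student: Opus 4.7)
The plan is to prove the pointwise identity by a matching pair of upper and lower bounds on $\frac{1}{2n}\log S_n(z,z)$, then promote pointwise convergence to local uniform convergence by exploiting plurisubharmonicity once $V_{K,Q}$ is known to be continuous.

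For the upper bound, I would exploit that $\{p_\nu^{(n)}\}$ is orthonormal in $L^2(e^{-2nQ}\tau)$: the weighted Bernstein-Markov inequality (\ref{wtdbm}) then gives $\|p_\nu^{(n)} e^{-nQ}\|_K \leq M_n$, so $M_n^{-1}p_\nu^{(n)}$ is admissible in the extremal formula (\ref{vkq1}) for $V_{K,Q}$, and hence $|p_\nu^{(n)}(z)| \leq M_n e^{n V_{K,Q}(z)}$ pointwise on $\CC^m$. Summing the squared moduli over the $\binom{n+m}{m}$ multi-indices $\nu$ yields
$$\frac{1}{2n}\log S_n(z,z) \leq V_{K,Q}(z) + \frac{1}{n}\log M_n + \frac{1}{2n}\log\binom{n+m}{m},$$
and the last two terms tend to zero (with an error uniform in $z$), giving $\limsup_n \frac{1}{2n}\log S_n(z,z) \leq V_{K,Q}(z)$.

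For the lower bound, I would invoke the reproducing property in the form $S_n(z,z) = \sup_{0\neq p\in\mathcal P_n} |p(z)|^2/\|p\|^2_{L^2(e^{-2nQ}\tau)}$. Given $\epsilon>0$ and $z_0\in\CC^m$, pick $k$ and an admissible $g\in\mathcal P_k$ (i.e., $\|ge^{-kQ}\|_K\leq 1$) with $\frac{1}{k}\log|g(z_0)| > V_{K,Q}(z_0) - \epsilon$. For $n = qk + r$ with $0 \leq r < k$, the polynomial $g^q\in\mathcal P_n$ satisfies $\|g^q e^{-nQ}\|_K \leq e^{r\max_K(-Q)}$ and hence $\|g^q\|_{L^2(e^{-2nQ}\tau)} \leq e^{k \max_K(-Q)}\sqrt{\tau(K)}$. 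Inserting into the extremal identity and taking $\frac{1}{2n}\log$ yields
$$\frac{1}{2n}\log S_n(z_0,z_0) \geq \frac{q}{n}\log|g(z_0)| - \frac{1}{2n}\log\bigl(e^{2k\max_K(-Q)}\tau(K)\bigr).$$
Sending $n\to\infty$ with $k$ and $g$ fixed gives $\liminf_n \frac{1}{2n}\log S_n(z_0,z_0) \geq \frac{1}{k}\log|g(z_0)| > V_{K,Q}(z_0)-\epsilon$, which proves the pointwise equality. For uniform convergence on compacta when $V_{K,Q}$ is continuous, note that each $u_n := \frac{1}{2n}\log S_n(z,z)$ is plurisubharmonic, the upper bound above holds uniformly, and any $L^1_{loc}$ subsequential limit of $(u_n)$ is psh, equals $V_{K,Q}$ almost everywhere, and hence equals $V_{K,Q}$ everywhere because $V_{K,Q}$ is continuous; a standard Hartogs-type argument then upgrades this to locally uniform convergence.

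The main obstacle is the lower bound step: one must convert a single admissible polynomial of fixed degree $k$ into a family of essentially admissible polynomials of every large degree $n$ despite the fact that the weight $e^{-nQ}$ is itself $n$-dependent. The power trick $g\mapsto g^q$, combined with a controlled $O(1)$ loss from the degree defect $r<k$ handled via boundedness of $Q$ on $K$, is precisely what makes this conversion work and is the only point where the weighted setting requires a nontrivial modification of the unweighted argument.
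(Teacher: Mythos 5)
Your pointwise argument is sound and is, in essence, the route the paper itself takes: the paper introduces $\phi_n$ in (\ref{step1}), cites the convergence (\ref{phin}), and compares $\phi_n^2$ with $S_n(z,z)$ via the two-sided estimate (\ref{step2}). Your upper bound is the right half of that comparison combined with the extremal definition of $V_{K,Q}$, and your $g\mapsto g^q$ power trick is a self-contained proof of the lower half which the paper outsources to the cited Siciak-type result; that part is a genuine (and welcome) gain in self-containedness. One small repair: to get $|p_\nu^{(n)}(z)|\le M_n e^{nV_{K,Q}(z)}$ you should argue via (\ref{vkq2}) --- the function $\frac1n\log\bigl(|p_\nu^{(n)}|/M_n\bigr)$ lies in $L(\CC^m)$ and is $\le Q$ on $K$, hence $\le V_{K,Q}$ --- rather than via (\ref{vkq1}), whose normalization by $\deg(p)$ instead of $n$ gives the wrong inequality at points where $V_{K,Q}<0$ whenever $\deg p_\nu^{(n)}<n$.

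The genuine gap is your final step. Hartogs' lemma together with $L^1_{loc}$ convergence to the continuous limit $V_{K,Q}$ yields only the one-sided uniform estimate $\sup_E\bigl(u_n-V_{K,Q}\bigr)\le\epsilon$ for $n$ large on a compact $E$; it does not produce the matching uniform lower bound, and the general principle you invoke is false: the functions $u_n(z)=\max\bigl(\frac1n\log|z|,-1\bigr)$ are plurisubharmonic, uniformly bounded, and converge to $0$ in $L^1_{loc}(\CC)$ and pointwise off the origin, yet $u_n(0)=-1$ for every $n$, so the convergence is not locally uniform. You must prove the uniform lower bound directly, which your own construction permits: given compact $E$ and $\epsilon>0$, for each $z_0\in E$ choose $k$ and admissible $g\in\mathcal P_k$ with $\frac1k\log|g(z_0)|>V_{K,Q}(z_0)-\epsilon/2$; since $g(z_0)\ne0$, the function $\frac1k\log|g|$ is continuous near $z_0$, and continuity of $V_{K,Q}$ gives a neighborhood $U_{z_0}$ on which $\frac1k\log|g|>V_{K,Q}-\epsilon$. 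Extract a finite subcover of $E$ and run your power-trick estimate simultaneously for the finitely many pairs $(k_i,g_i)$; the error terms are $O(1/n)$ with constants depending only on the $k_i$, $\max_K|Q|$ and $\tau(K)$, so $\inf_E\bigl(u_n-V_{K,Q}\bigr)\ge-\epsilon$ for $n$ large. (Alternatively, quote, as the paper does, that $\frac1n\log\phi_n\to V_{K,Q}$ uniformly on compacta when $V_{K,Q}$ is continuous, and transfer to $S_n$ via (\ref{step2}).)
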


Since we will need to modify this result in the unbounded case, we briefly indicate the two main steps in the proof. First, for each $n=1,2,...$ define
\begin{equation} \label{step1} \phi_n(z):=\sup \{ |p(z)|: p\in  \mathcal P_n, \ ||pe^{-nQ}||_K\leq 1\}.\end{equation} 
Then \begin{equation}\label{phin} \lim_{n\to \infty} \frac{1}{n}\log \phi_n(z) = V_{K,Q}(z)\end{equation} pointwise on $\CC^m$; and the convergence is uniform on compact subsets of $\CC^m$ if $V_{K,Q}$ is continuous. The next step is a comparison between $\phi_n(z)$ and $S_n(z,z)$: given $\epsilon >0$, there exists $C=C(\epsilon)>0$ independent of $n$ such that
\begin{equation} \label{step2}\frac{1}{m_n}\leq \frac{S_n(z,z)}{\phi_n(z)^2}\leq C^2(1+\epsilon)^{2n}m_n\end{equation}
where $m_n=$dim$(\mathcal P_n)= {m+n\choose n}=0(n^m)$. The left-hand inequality simply follows from the reproducing property of the Bergman kernel $S_n(z,\zeta)$ and the Cauchy-Schwarz inequality while the right-hand inequality uses (\ref{wtdbm}) (cf., Lemma 2.2 of \cite{Bloom}) .

\section{\bf Extremal function asymptotics: random polynomials.} \label{sec:efarp}

In this section $K$ is a nonpluripolar compact set in $\CC^m$; $Q$ is a real-valued, continuous function on $K$; and $\tau$ is a probability measure on $K$ such that the triple $(K,Q,\tau)$ satisfies (\ref{wtdbm}). Letting $\{p_{\nu}^{(n)}\}_{|\nu|\leq n}$ be an orthonormal basis of polynomials of degree at most $n$ in $L^2(e^{-2nQ}\tau)$ we consider random polynomials of degree at most $n$ of the form 
$$H_n(z):=\sum_{|\nu|\leq n} a_{\nu}^{(n)}p_{\nu}^{(n)}(z)$$
where the $a_{\nu}^{(n)}$ are i.i.d. complex random variables with a distribution satisfying (\ref{hyp1}) and (\ref{hyp2}). This places a probability measure $\mathcal H_n$ on $\mathcal P_n$. We form the product probability space of sequences of polynomials:
$$\mathcal H:=\otimes_{n=1}^{\infty} (\mathcal P_n,\mathcal H_n).$$
Since $m_n=$dim$(\mathcal P_n$) we can identify $\mathcal H$ with $\otimes_{n=1}^{\infty}(\CC^{m_n},Prob_{m_n})$.

\begin{theorem} \label{point} Let $a_{\nu}^{(n)}$ be i.i.d. complex random variables with a distribution satisfying (\ref{hyp1}) and (\ref{hyp2}). Then almost surely in $\mathcal H$ we have
$$\bigl(\limsup_{n\to \infty}\frac{1}{n}\log |H_n(z)|\bigr)^*=V_{K,Q}^*(z)$$
for all $z\in \CC^m$.

\end{theorem}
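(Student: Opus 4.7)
The plan is to realize $H_n(z) = \langle a^{(n)}, w^{(n)}(z)\rangle$ as a Hermitian inner product, with $a^{(n)} = (a^{(n)}_\nu)_{|\nu|\le n}$ and $w^{(n)}(z) := (\overline{p^{(n)}_\nu(z)})_{|\nu|\le n}$, so that $\|w^{(n)}(z)\|^2 = S_n(z,z)$, and then to apply Corollary \ref{useful} in both directions with $m(n) = m_n = \dim \mathcal{P}_n = O(n^m)$. Proposition \ref{snfcnprop} identifies the deterministic limit $\tfrac{1}{n}\log\|w^{(n)}(z)\| \to V_{K,Q}(z)$ pointwise on $\CC^m$, which is what turns the two halves of Corollary \ref{useful} into the two halves of the theorem.

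For the upper bound I would invoke the ``for all $\{w^{(n)}\}$'' part of Corollary \ref{useful}: almost surely in $\mathcal{H}$, simultaneously for every $z \in \CC^m$,
\[
v(z) \;:=\; \limsup_n \tfrac{1}{n}\log|H_n(z)| \;\le\; V_{K,Q}(z) \;\le\; V_{K,Q}^*(z),
\]
which at once gives $v^* \le V_{K,Q}^*$. For the lower bound the ``for each $\{w^{(n)}\}$'' part only controls $v$ at one point at a time, so I would first fix a countable dense $D = \{z_k\} \subset \CC^m \setminus \{V_{K,Q}<V_{K,Q}^*\}$ chosen carefully so that for every $z_0 \in \CC^m$ some sub-sequence $z_{k_j} \in D$ satisfies $z_{k_j} \to z_0$ and $V_{K,Q}^*(z_{k_j}) \to V_{K,Q}^*(z_0)$. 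Such a $D$ is available: $E := \{V_{K,Q}<V_{K,Q}^*\}$ is pluripolar (in particular Lebesgue-null), $V_{K,Q}^*$ is upper semicontinuous so $\sup_{B(z_0,r)}V_{K,Q}^*\to V_{K,Q}^*(z_0)$ as $r\to 0$, and the sub-mean-value inequality for the psh function $V_{K,Q}^*$ forces $\sup_B V_{K,Q}^* = \sup_{B\setminus E}V_{K,Q}^*$ on each ball; concretely one selects, for each ball $B_j$ in a countable topological basis and each integer $\ell$, a point $z_{j,\ell} \in B_j \setminus E$ with $V_{K,Q}(z_{j,\ell}) \ge \sup_{B_j} V_{K,Q}^* - 1/\ell$. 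Applying (\ref{liminfeqn}) at each $z_k$ and intersecting the countably many probability-one events then gives a single probability-one event on which $v(z_k) \ge V_{K,Q}^*(z_k)$ for every $k$.

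On this event, for any $z_0$ I pick $z_{k_j} \to z_0$ in $D$ with $V_{K,Q}^*(z_{k_j}) \to V_{K,Q}^*(z_0)$, and then upper semicontinuity of the regularization $v^*$ supplies
\[
v^*(z_0) \;\ge\; \limsup_j v^*(z_{k_j}) \;\ge\; \limsup_j V_{K,Q}^*(z_{k_j}) \;=\; V_{K,Q}^*(z_0),
\]
so that combining with the upper bound yields $v^* = V_{K,Q}^*$. The main obstacle in this plan is precisely this last step: the probabilistic lower bound is a per-point statement, and the work of pushing it to a pointwise statement for every $z_0$ is entirely deterministic, resting on the pluripotential-theoretic facts about $V_{K,Q}^*$ (pluripolarity of $E$, the mean-value inequality, and upper semicontinuity) that make the dense set $D$ ``large enough'' for the upper semicontinuity of $v^*$ to transport the information from $D$ to all of $\CC^m$.
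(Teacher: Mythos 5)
Your proposal is correct and follows essentially the same route as the paper: the same identification $H_n(z)=\langle a^{(n)},w^{(n)}(z)\rangle$ with $\|w^{(n)}(z)\|^2=S_n(z,z)$, the two halves of Corollary \ref{useful} combined with Proposition \ref{snfcnprop}, and a countable dense set plus a countable intersection of probability-one events for the lower bound. The only (harmless) divergence is in the deterministic endgame: the paper takes an arbitrary countable dense set, proves the regularized $\limsup$ equals $V_{K,Q}$ at the continuity points of $V_{K,Q}$ (which are co-pluripolar, hence co-null) and then invokes the fact that two plurisubharmonic functions equal a.e.\ coincide, whereas you build that same a.e./sub-mean-value information into a specially chosen dense set avoiding $\{V_{K,Q}<V_{K,Q}^*\}$ and conclude directly from upper semicontinuity of the regularization.
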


\begin{proof} Using the first part of Corollary \ref{useful}, (\ref{limsupeqn}), with $m(n)$ replaced by $m_n$ and
$$w^{(n)}:=p^{(n)}(z)= (p_1^{(n)}(z),...,p_{m_n}^{(n)}(z))\in \CC^{m_n},$$ almost surely in $\mathcal H$  
\begin{equation}\label{newlimsup} \limsup_{n\to \infty}\frac{1}{n}\log |H_n(z)|\leq V_{K,Q}(z)\end{equation}
for all $z\in \CC^m$ from Proposition \ref{snfcnprop}. Fix a countable dense subset $\{z_t\}_{t\in S}$ of $\CC^m$. Using the second part of Corollary \ref{useful}, (\ref{liminfeqn}), for each $z_t$, almost surely in $\mathcal H$ we have
\begin{equation}\label{newliminf}\liminf_{n\to \infty}\frac{1}{n}\log |H_n(z_t)|\geq V_{K,Q}(z_t).\end{equation}
A countable intersection of sets of probability one is a set of probability one; thus (\ref{newliminf}) holds almost surely in $\mathcal H$ for each $z_t, \ t\in S$.

Define
$$H(z):=\bigl(\limsup_{n\to \infty}\frac{1}{n}\log |H_n(z)|\bigr)^*.$$
From (\ref{newlimsup}), $H(z)\leq V_{K,Q}^*(z)$ for all $z\in \CC^m$. Moreover $H$ is plurisubharmonic; indeed, $H\in L(\CC^m)$. By (\ref{newliminf}), $H(z_t)\geq V_{K,Q}(z_t)$ for all $t\in S$. Now given $z\in \CC^m$ at which $V_{K,Q}$ is continuous, let $S'\subset S$ with $\{z_t\}_{t\in S'}$ converging to $z$. Then,
$$V_{K,Q}(z)=\lim_{t\in S', \ z_t \to z}V_{K,Q}(z_t)\leq \limsup_{t\in S', \ z_t \to z}H(z_t)\leq H(z).$$ 
Thus $H(z)=V_{K,Q}(z)$ for all $z\in \CC^m$ at which $V_{K,Q}$ is continuous. But $V_{K,Q}$ is continuous at all points except possibly for a pluripolar set, in particular a.e. in $\CC^m$. Thus $H(z)$ and $V_{K,Q}^*(z)$ are plurisubharmonic functions equal a.e. so by a general property of plurisubharmonic functions they are equal.

\end{proof}

In order to discuss convergence of linear differential operators applied to $\frac{1}{n}\log |H_n(z)|$, we need to at least have convergence to $V_{K,Q}^*(z)$ in $L_{loc}^1(\CC^m)$.

\begin{theorem} \label{l1} Let $a_{\nu}^{(n)}$ be i.i.d. complex random variables with a distribution satisfying (\ref{hyp1}) and (\ref{hyp2}). Then almost surely in $\mathcal H$ we have
$$\lim_{n\to \infty}\frac{1}{n}\log |H_n(z)|=V_{K,Q}^*(z)$$
in $L_{loc}^1(\CC^m)$ and hence
$$\lim_{n\to \infty}dd^c\bigl(\frac{1}{n}\log |H_n(z)|\bigr)=dd^cV_{K,Q}^*(z)$$
as positive currents, where $dd^c=\frac{i}{\pi}\partial \bar \partial$.
\end{theorem}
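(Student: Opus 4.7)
The plan is to set $u_n(z) := \frac{1}{n}\log|H_n(z)|$, show that this sequence of psh functions is locally uniformly bounded above almost surely, and then invoke the standard $L^1_{loc}$-compactness of such families together with Theorem \ref{point} to identify every subsequential limit as $V_{K,Q}^*$. Once $u_n \to V_{K,Q}^*$ in $L^1_{loc}$ is established, the current convergence $dd^c u_n \to dd^c V_{K,Q}^*$ is automatic by continuity of $dd^c$ on distributions.

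For the a.s.\ local uniform upper bound I would combine three ingredients. Cauchy--Schwarz gives $|H_n(z)|^2 = |\langle a^{(n)}, p^{(n)}(z)\rangle|^2 \leq \|a^{(n)}\|^2 S_n(z,z)$. Corollary \ref{forlater} (with $m(n) = m_n = O(n^m)$ and $k=2$) together with Borel--Cantelli forces $\|a^{(n)}\| \leq n^2$ a.s.\ for $n$ large. The right-hand inequality in (\ref{step2}) bounds $S_n(z,z) \leq C^2(1+\epsilon)^{2n} m_n \phi_n(z)^2$, and the definition of $\phi_n$ in (\ref{step1}), combined with the extremal characterization (\ref{vkq1})--(\ref{vkq2}), gives $\phi_n(z) \leq e^{nV_{K,Q}^*(z)}$ (each admissible $p$ satisfies $\tfrac{1}{n}\log|p|\in L(\CC^m)$ and $\tfrac{1}{n}\log|p|\leq Q$ on $K$). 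Combining these,
$$u_n(z) \leq V_{K,Q}^*(z) + \log(1+\epsilon) + \frac{2\log n}{n} + \frac{\log(m_n C^2)}{2n}$$
a.s. Since $V_{K,Q}^*\in L(\CC^m)$ is locally bounded above, $\{u_n\}$ is locally uniformly bounded above a.s.

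With this uniform upper bound, standard compactness of psh families yields: every subsequence of $\{u_n\}$ has a further subsequence $\{u_{n_k}\}$ converging in $L^1_{loc}$ to some psh function $v$. It remains to identify $v = V_{K,Q}^*$. For the upper side, $L^1_{loc}$-convergence of psh functions gives $v(z) = (\limsup_k u_{n_k}(z))^*$, and (\ref{newlimsup}) bounds this a.s.\ by $V_{K,Q}^*(z)$. For the lower side, fix the countable dense set $\{z_t\}_{t\in S}$ from the proof of Theorem \ref{point}: by (\ref{newliminf}), a.s.\ $\liminf_n u_n(z_t) \geq V_{K,Q}(z_t)$ for every $t$, so also $v(z_t)\geq \limsup_k u_{n_k}(z_t)\geq \liminf_k u_{n_k}(z_t) \geq V_{K,Q}(z_t)$. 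Propagating this to all points of continuity of $V_{K,Q}$ by approximating $z$ with $z_{t_j}\to z$ and applying upper semicontinuity of $v$ (exactly as in the last paragraph of the proof of Theorem \ref{point}), then extending to all of $\CC^m$ using the equality-a.e.\ rigidity of psh functions, gives $v = V_{K,Q}^*$.

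Since every subsequential $L^1_{loc}$-limit equals $V_{K,Q}^*$ a.s., the full sequence $u_n$ converges to $V_{K,Q}^*$ in $L^1_{loc}$, and the current convergence follows. The main obstacle is really the a.s.\ local uniform upper envelope in the second paragraph; without it, the sequence $\{u_n\}$ need not be $L^1_{loc}$-precompact and the pointwise $\limsup$ of Theorem \ref{point} does not by itself suffice. The argument above packages the crude a.s.\ tail control on $\|a^{(n)}\|$ (Corollary \ref{forlater}) with the deterministic Bergman/extremal comparison (\ref{step2}) already established, and everything else reduces to routine pluripotential theory.
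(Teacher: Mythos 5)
Your argument is essentially the paper's: the paper packages the same idea as a deterministic statement (Theorem \ref{thmdet}, proved by contradiction via H\"ormander's Theorem 3.2.12) and then observes, exactly as you do, that the probability-one events of Theorem \ref{point} pass to every subsequence because $\limsup$ only decreases and $\liminf$ only increases along subsequences; your direct ``every subsequence has a further subsequence converging to $V_{K,Q}^*$'' formulation is a logically equivalent repackaging. One point in your favor is that you make the a.s.\ local uniform upper bound explicit (the paper disposes of it inside Theorem \ref{thmdet} with a rather terse appeal to Hartogs' lemma), which is the honest prerequisite for the compactness step.

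There is, however, one slip in that upper-bound paragraph: applying Corollary \ref{forlater} with $k=2$ gives $Prob_{m_n}\{\|a^{(n)}\|\geq n^2\}\leq T(m_n/n^2)^2=O(n^{2m-4})$, which is summable only when $m=1$; for $m\geq 2$ Borel--Cantelli does not apply as you have written it. The fix is immediate and already implicit in Corollary \ref{useful}: take $k=m+1$ (or any $k>m+\tfrac12$), so that the bound is $O(n^{2m-2k})$ with $2k-2m>1$, and conclude $\|a^{(n)}\|\leq n^{m+1}$ a.s.\ for $n$ large; since $\tfrac{1}{n}\log n^{m+1}\to 0$, the resulting correction term in your displayed inequality still vanishes and the local uniform upper bound survives unchanged. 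With that adjustment the proof is correct.
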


 \begin{remark} If $m=1$, $dd^cV_{K,Q}^*$ is a positive current of bidegree $(1,1)$ which can be identified with the positive measure $\frac{1}{2\pi}\Delta V_{K,Q}^*$ where $\Delta$ is the usual Laplacian, and furthermore this measure  can be identified with the 
weighted equilibrium measure for $K,Q$ \cite{ST}. For example, if $S^1=\{z\in \CC: |z|=1\}$ is the unit circle, from (\ref{vtm}), $V_{S^1}(z)=\log^+|z|:=\max[0,\log |z|]$ and
$$dd^cV_{S^1}=\frac{1}{2\pi}\Delta \log^+|z|= \frac{1}{2\pi}d\theta.$$ 
The monomials $\{z^{j}\}$ are orthonormal with respect to $\frac{1}{2\pi}d\theta$; moreover, the pair $(S^1,\frac{1}{2\pi}d\theta)$ satisfies a Bernstein-Markov property. For a random polynomial $H_n(z)=\sum_{j=0}^na_j^{(n)}z^j=a_n^{(n)}\sum_{k=1}^n(z-z^{(n)}_k)$, $$dd^c\bigl(\frac{1}{n}\log |H_n(z)|\bigr)=\frac{1}{2\pi}\Delta\bigl(\frac{1}{n}\log |H_n(z)|\bigr)=\frac{1}{n}\sum_{k=1}^n\delta_{z^{(n)}_k},$$ 
the {\it normalized zero measure} of $H_n$. In particular, if the $a_{j}^{(n)}$ are complex random variables with a distribution satisfying (\ref{hyp1}) and (\ref{hyp2}), then a.s. the normalized zero measures associated to a sequence of random polynomials $\{H_n\}$ satisfies  
$$\lim_{n\to \infty}\frac{1}{n}\sum_{k=1}^n\delta_{z^{(n)}_k}=\frac{1}{2\pi}d\theta$$
as positive measures.

\end{remark}

The proof of Theorem \ref{l1} will follow from a modification of the proof of Theorem \ref{point}  and the general deterministic result below (see also \cite{SZ}). 
\begin{theorem} \label{thmdet} Let $\{\psi_n\}\subset L(\CC^m)$. Suppose
$$\bigl(\limsup_{n\to \infty}\psi_n\bigr)^*=V(z)$$
for all $z\in \CC^m$ where $V\not \equiv 0$ and $V\in L(\CC^m)$. In addition, suppose for any subsequence $J$ of positive integers we have 
$$\bigl(\limsup_{n\in J}\psi_n\bigr)^*=V(z)$$
for all $z\in \CC^m$. Then $\psi_n \to V$ in $L_{loc}^1(\CC^m)$.

\end{theorem}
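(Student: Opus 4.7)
The plan is to show $\psi_n \to V$ in $L_{loc}^1(\CC^m)$ by the classical subsequence-compactness strategy: prove that every subsequence of $\{\psi_n\}$ admits a further sub-subsequence converging to $V$ in $L_{loc}^1$, which then forces the full sequence to converge to $V$. The main technical tool is H\"ormander's compactness theorem for plurisubharmonic (psh) sequences: any family of psh functions locally uniformly bounded above has, through any subsequence, either a sub-subsequence converging in $L_{loc}^1$ to a psh limit or a sub-subsequence diverging uniformly to $-\infty$ on compact sets.

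The first step is to establish that $\{\psi_n\}$ is locally uniformly bounded above on $\CC^m$. Since each $\psi_n \in L(\CC^m)$, there is a minimal constant $c_n$ with $\psi_n(z) \leq \log^+|z| + c_n$. Arguing by contradiction, suppose $c_{n_k} \to \infty$ along a subsequence. The normalized psh sequence $\tilde\psi_{n_k} := \psi_{n_k} - c_{n_k}$ satisfies $\tilde\psi_{n_k} \leq \log^+|z|$ and so is locally uniformly bounded above. H\"ormander's theorem then yields a sub-subsequence that either converges in $L_{loc}^1$ to a psh limit $\tilde\psi \not\equiv -\infty$ or diverges uniformly to $-\infty$ on compacts. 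In the convergent case, off a pluripolar set $\limsup_j \tilde\psi_{n_{k_j}}(z) = \tilde\psi(z) > -\infty$, so $\limsup_j \psi_{n_{k_j}}(z) = +\infty$, contradicting the pointwise bound $\limsup_n \psi_n(z) \leq V(z) < \infty$ (valid off a pluripolar set). The uniform-divergence case forces the peak of $\tilde\psi_{n_k}$ out to infinity, and a more delicate argument combining the subsequence hypothesis applied to $\{\psi_{n_k}\}$ itself, sub-mean-value inequalities near the peak, and the growth condition $V(z) \leq \log^+|z| + c_V$ yields the same contradiction. Consequently $\{c_n\}$ is bounded, and $\{\psi_n\}$ is locally uniformly bounded above.

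With local uniform upper boundedness secured, any subsequence $\{\psi_{n_k}\}$ admits, by H\"ormander's compactness theorem, a sub-subsequence $\{\psi_{n_{k_j}}\}$ that either diverges uniformly to $-\infty$ on compact sets or converges in $L_{loc}^1(\CC^m)$ to a psh function $\psi$. The uniform-divergence possibility is excluded by the subsequence hypothesis $(\limsup_j \psi_{n_{k_j}})^* = V$, which cannot be $\equiv -\infty$. In the remaining case, the standard identity $(\limsup_j u_j)^* = u$ for a psh sequence $u_j \to u$ in $L_{loc}^1$ combines with the hypothesis to give $\psi = V$. Hence every subsequence of $\{\psi_n\}$ has a sub-subsequence $L_{loc}^1$-converging to $V$, and the full sequence converges, i.e., $\psi_n \to V$ in $L_{loc}^1(\CC^m)$.

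The principal obstacle is Step 1, establishing local uniform upper boundedness. The growth condition $\psi_n \leq \log^+|z| + c_n$ alone does not enforce uniform control, since $c_n$ may a priori grow. The contradiction argument requires the normalization $\tilde\psi_n = \psi_n - c_n$ together with careful use of the pointwise bound $\limsup \psi_n \leq V$ from upper-semicontinuous regularization, and the subsequence hypothesis is needed particularly in the subtle ``peak escaping to infinity'' case. Once Step 1 is in hand, the remainder is a textbook application of pluripotential-theoretic compactness.
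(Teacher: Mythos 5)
Your argument is essentially the paper's: both rest on H\"ormander's $L^1_{loc}$ compactness theorem for locally uniformly upper-bounded plurisubharmonic sequences, with the subsequence hypothesis used to identify every $L^1_{loc}$-limit point as $V$. The paper packages this as a direct contradiction with $\|\psi_n-V\|_{L^1(B)}\geq\epsilon$ along a subsequence rather than via the subsequence-of-subsequences principle, but that difference is cosmetic. The one spot where your writeup is incomplete is the ``peak escaping to infinity'' case of Step 1, which you only gesture at; in fact that case is vacuous. Since $V_{\bar B}(z)=\log^+|z|$ for the closed unit ball $\bar B$, your minimal constant satisfies $c_n=\sup_{\bar B}\psi_n$: indeed $\psi_n\leq c$ on $\bar B$ forces $\psi_n-c\leq V_{\bar B}$ by the definition of the extremal function, and conversely $\psi_n\leq\log^+|z|+c$ gives $\psi_n\leq c$ on $\bar B$. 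Hence the normalized functions $\psi_n-c_n$ have supremum exactly $0$ on the compact set $\bar B$ and cannot tend to $-\infty$ uniformly on compacts, so only your first (convergent) case occurs, and the argument you give there already produces $\limsup_n\psi_n=+\infty$ on a set of positive measure, contradicting $\limsup_n\psi_n\leq V<\infty$. For comparison, the paper dispatches the entire upper-bound issue in one line (``by Hartogs' lemma, the sequence is locally bounded above since $V$ is''), which is itself terse; your Step 1, once repaired as above, is a legitimate expansion of that sentence, and the rest of your proof is correct.
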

\begin{proof} The proof is by contradiction. If the conclusion is false, there is a ball $B\subset \CC^m$, an $\epsilon >0$, and a subsequence $J$ of positive integers with
\begin{equation}\label{contra} ||\psi_n -V||_{L^1(B)} \geq \epsilon, \ n\in J.\end{equation}
By Hartogs' lemma, the sequence $\{\psi_n\}_{n\in J}$ is locally bounded above (since $V$ is). Appealing to Theorem 3.2.12 of \cite{Ho}, there is a subsequence $J_1\subset J$ and $g\in L^1(B)$ with $\lim_{n\in J_1}\psi_n = g$ in $L^1(B)$. It follows from standard measure theory that there is a further subsequence $J_2 \subset J_1$ with $\lim_{n\in J_2}\psi_n(z) = g(z)$ a.e. in $B$. By assumption, 
$$\bigl(\limsup_{n\in J_2}\psi_n\bigr)^*=V(z)$$
for all $z\in \CC^m$ so that $V(z)=g(z)$ a.e. in $B$. This contradicts (\ref{contra}).

\end{proof}

\begin{proof} {\it of Theorem \ref{l1}}: From Theorem \ref{thmdet}, we need to show almost surely in $\mathcal H$ that for any subsequence $J$ of positive integers, we have
$$\bigl(\limsup_{n\in J}\frac{1}{n}\log |H_n(z)|\bigr)^*=V_{K,Q}^*(z)$$
for all $z\in \CC^m$. Fix any subsequence $J$. Following the proof of Theorem \ref{point}, almost surely in $\mathcal H$  
$$\limsup_{n\in J}\frac{1}{n}\log |H_n(z)|\leq \limsup_{n\to \infty}\frac{1}{n}\log |H_n(z)|\leq V_{K,Q}(z)$$
for all $z\in \CC^m$ from (\ref{newlimsup}) and the fact that $J$ is a subsequence of positive integers. Fix a countable dense subset $\{z_t\}_{t\in S}$ of $\CC^m$.  Then for each $z_t$, almost surely in $\mathcal H$ we have
$$\liminf_{n\in J}\frac{1}{n}\log |H_n(z)|\geq \liminf_{n\to \infty}\frac{1}{n}\log |H_n(z_t)|\geq V_{K,Q}(z_t)$$
from (\ref{newliminf}) and the fact that $J$ is a subsequence of positive integers. This relation holds almost surely in $\mathcal H$ for each $z_t, \ t\in S$. 

Now define
$$H_J(z):=\bigl(\limsup_{n\in J}\frac{1}{n}\log |H_n(z)|\bigr)^*.$$
Then $H_J(z)\leq V_{K,Q}^*(z)$ for all $z\in \CC^m$; $H_J$ is plurisubharmonic; indeed, $H_J\in L(\CC^m)$; and $H_J(z_t)\geq V_{K,Q}(z_t)$ for all $t\in S$. Given $z\in \CC^m$ at which $V_{K,Q}$ is continuous, let $S'\subset S$ with $\{z_t\}_{t\in S'}$ converging to $z$. Then
$$V_{K,Q}(z)=\lim_{t\in S', \ z_t \to z}V_{K,Q}(z_t)\leq \limsup_{t\in S', \ z_t \to z} H_J(z_t)\leq H_J(z).$$ 
Thus $H_J(z)=V_{K,Q}(z)$ for all $z\in \CC^m$ at which $V_{K,Q}$ is continuous and hence everywhere.

\end{proof}

 \begin{remark} From (\ref{vtm}), for the $m-$torus $(S^1)^m\subset \CC^m$, 
$$V_{(S^1)^m}(z)=\max_{j=1,...,m} \log^+|z_j|$$ and the monomials $\{z^{\nu}=z_1^{\nu_1}\cdots z_m^{\nu_m}\}$ are orthonormal with respect to the measure $$ \frac{1}{2\pi}d\theta_1 \cdots \frac{1}{2\pi}d\theta_m.$$ 
The pair $((S^1)^m,\frac{1}{2\pi}d\theta_1 \cdots \frac{1}{2\pi}d\theta_m)$ satisfies a Bernstein-Markov property. Thus a.s. a sequence of random polynomials $\{H_n\}$ of the form 
$$H_n(z):=\sum_{|\nu|\leq n} a_{\nu}^{(n)}z^{\nu},$$
where the $a_{\nu}^{(n)}$ are complex random variables with a distribution satisfying (\ref{hyp1}) and (\ref{hyp2}), satisfies  
$$\lim_{n\to \infty}\frac{1}{n}\log |H_n(z)|=\max_{j=1,...,m} \log^+|z_j|$$
pointwise a.e. for $z\in \CC^m$ and in $L_{loc}^1(\CC^m)$.

 \begin{remark} We emphasize that in Theorems \ref{point} and \ref{l1} the probability space $\mathcal{H}$ depends on $\tau$ but the weighted 
pluricomplex Green function $V_{K,Q}^*$ depends only on $K$ and $Q$. 
\end{remark}

\end{remark}

\section {\bf Extensions.} \label{sec:ext} 

Theorems \ref{point} and \ref{l1} remain valid for random polynomial {\it mappings}. Precisely, the set-up is as follows. For any $k=1,...,m$ and for each $n=1,2,...$ we consider $k-$tuples $F_n:=(H_n^{(1)}(z),...,H_n^{(k)}(z))$ of random polynomials of degree at most $n$, i.e., random polynomial mappings $(\mathcal P_n)^k$ of degree at most $n$, where each $H_n^{(j)}$ for $j=1,...,k$ is of the form 
\begin{equation}\label{ranmap} H_n^{(j)}(z):=\sum_{|\nu|\leq n} a_{\nu}^{(n,j)}p_{\nu}^{(n)}(z)\end{equation}
and the $a_{\nu}^{(n,j)}$ are complex random variables with a distribution satisfying (\ref{hyp1}) and (\ref{hyp2}). This places a probability measure $\mathcal F_n$ on $(\mathcal P_n)^k$. We form the product probability space of sequences of these polynomial mappings:
$$\mathcal F:=\otimes_{n=1}^{\infty} ((\mathcal P_n)^k,\mathcal F_n).$$
Here we can identify $\mathcal F$ with $\otimes_{n=1}^{\infty}((\CC^{m_n})^k,(Prob_{m_n})^k)$.

Writing $||F_n(z)||^2 := \sum_{j=1}^k|H_n^{(j)}(z)|^2$, we have the following generalizations of Theorems \ref{point} and \ref{l1}. 

\begin{theorem} \label{pointmap} For any $k=1,...,m$ let $a_{\nu}^{(n,j)}$ for $j=1,...,k$ and $n=1,2,...$ be i.i.d. complex random variables with a distribution satisfying (\ref{hyp1}) and (\ref{hyp2}). Then almost surely in $\mathcal F$ we have
$$\bigl(\limsup_{n\to \infty}\frac{1}{n}\log ||F_n(z)||\bigr)^*=V_{K,Q}^*(z)$$
pointwise for all $z\in \CC^m$ and
$$\lim_{n\to \infty}\frac{1}{n}\log ||F_n(z)||=V_{K,Q}^*(z)$$
in $L_{loc}^1(\CC^m)$. Hence
$$\lim_{n\to \infty}dd^c\bigl(\frac{1}{n}\log ||F_n(z)||\bigr)=dd^cV_{K,Q}^*(z)$$
as positive currents.
\end{theorem}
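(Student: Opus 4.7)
The plan is to reduce the vector case to the scalar case through the elementary sandwich
$$|H_n^{(j)}(z)|\;\leq\;\|F_n(z)\|\;\leq\;\sqrt{k}\,\max_{1\leq j\leq k}|H_n^{(j)}(z)|,$$
valid for every $j=1,\dots,k$. Since $\mathcal F$ is a $k$-fold product of copies of $\mathcal H$ (one factor for each component $H_n^{(j)}$), Theorem \ref{point} applies on each factor, and a finite intersection of probability-one events is of probability one. For the upper bound in the $\limsup$ statement, this gives
$$\limsup_{n\to\infty}\frac{1}{n}\log\|F_n(z)\|\;\leq\;\max_{j}\limsup_{n\to\infty}\frac{1}{n}\log|H_n^{(j)}(z)|\;\leq\;V_{K,Q}(z)$$
almost surely in $\mathcal F$ for all $z\in\CC^m$, after noting $\tfrac{1}{n}\log\sqrt{k}\to 0$.

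For the lower bound, fix a countable dense set $\{z_t\}_{t\in S}\subset\CC^m$; here only the first component is needed, since $\|F_n(z)\|\geq|H_n^{(1)}(z)|$. Applying the $\liminf$ half of Corollary \ref{useful} on the first-component factor at each $z_t$, then intersecting countably, we obtain a.s.
$$\liminf_{n\to\infty}\frac{1}{n}\log\|F_n(z_t)\|\;\geq\;V_{K,Q}(z_t)\qquad\text{for all }t\in S.$$
The usc-regularization $\widetilde H(z):=\bigl(\limsup_{n\to\infty}\tfrac{1}{n}\log\|F_n(z)\|\bigr)^{*}$ therefore lies in $L(\CC^m)$, satisfies $\widetilde H\leq V_{K,Q}^{*}$ everywhere, and $\widetilde H(z_t)\geq V_{K,Q}(z_t)$ for all $t\in S$. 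Exactly as in the last paragraph of the proof of Theorem \ref{point}, density of $\{z_t\}$ together with continuity of $V_{K,Q}$ off a pluripolar set forces $\widetilde H=V_{K,Q}^{*}$ identically, yielding the pointwise assertion.

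For the $L^{1}_{loc}$ and current statements, set $\psi_n:=\tfrac{1}{n}\log\|F_n\|$. Since each $H_n^{(j)}$ is a polynomial of degree at most $n$, $\psi_n$ is plurisubharmonic and $\psi_n(z)-\log|z|$ is bounded above as $|z|\to\infty$, so $\psi_n\in L(\CC^m)$. Repeating the argument above for an arbitrary subsequence $J$ (exactly mirroring the proof of Theorem \ref{l1}), we get a.s. that $(\limsup_{n\in J}\psi_n)^{*}=V_{K,Q}^{*}$ for every $J$, so Theorem \ref{thmdet} delivers $\psi_n\to V_{K,Q}^{*}$ in $L^{1}_{loc}(\CC^m)$. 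The current convergence then follows from continuity of $dd^{c}$ with respect to $L^{1}_{loc}$ convergence of psh functions.

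The proof is essentially bookkeeping; no new probabilistic input is required beyond Corollary \ref{useful} and no new pluripotential input beyond Theorems \ref{point}, \ref{l1} and \ref{thmdet}. The only point that deserves a moment's care is that the lower bound uses only the first component, so the events of probability one invoked there depend on only one factor of $\mathcal F$; this is perfectly compatible with the upper bound being established jointly on all $k$ factors, and it is the source of the asymmetric treatment of $\limsup$ versus $\liminf$ throughout.
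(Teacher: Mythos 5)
Your proposal is correct and is essentially the argument the paper intends: the paper states Theorem \ref{pointmap} without a separate proof, asserting that the proofs of Theorems \ref{point} and \ref{l1} carry over, and your sandwich $|H_n^{(j)}(z)|\leq \|F_n(z)\|\leq \sqrt{k}\max_j|H_n^{(j)}(z)|$ together with Corollary \ref{useful} and Theorem \ref{thmdet} is exactly the right bookkeeping to make that explicit. Your observation that the $\liminf$ bound needs only one component (and hence probability-one events attached to a single factor of $\mathcal F$), while the $\limsup$ bound holds jointly for all $z$ and all factors, correctly mirrors the asymmetry already present in Corollary \ref{useful}.
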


\noindent  Again, the probability space $\mathcal{F}$ depends on $\tau$ but the weighted 
pluricomplex Green function $V_{K,Q}^*$ depends only on $K$ and $Q$. 

Theorems \ref{point}, \ref{l1}, and \ref{pointmap} extend to the more general setting of positive holomorphic line bundles over compact K\"ahler manifolds. Let $X$ be a compact, complex manifold of (complex) dimension $m$ equipped with a K\"ahler form $\omega$. Fixing a volume form on $X$ (e.g., $\omega^m$), define
$$PSH(X,\omega):=\{\Phi\in L^1(X): \Phi \ \hbox{uppersemicontinuous and} \ dd^c\Phi +\omega \geq 0\},$$
the class of {\it $\omega-$plurisubharmonic functions} on $X$. In the case of $X=\PP^m$ ($m-$complex dimensional projective space) with the Fubini-Study form $\omega_{FS}$, there is a one-to-one correspondence between $PSH(\PP^m,\omega_{FS})$ and $L(\CC^m)$. Indeed, identifying $\CC^m$ with the affine subset of $\PP^m$ given by the set of points $\{[1:z_1:\cdots:z_m]\}$ in homogeneous coordinates, if $\Phi \in PSH(\PP^m,\omega_{FS})$, then 
\begin{equation} \label{pshtol} u(z)=u(z_1,...,z_m):= \Phi ([1:z_1:\cdots:z_m])+u_0(z)\in  L(\CC^m)\end{equation}
where $u_0(z):=\frac{1}{2}\log (1+|z|^2)$.

Let ${\mathcal L}$ be a holomorphic line bundle over $X$. For a smooth hermitian metric $\psi=\{\psi_i\}$ on ${\mathcal L}$, where $\psi_i$ are defined on a trivializing open cover $\{U_i\}$ of $X$, $dd^c\psi$ is a globally defined $(1,1)-$form on $X$, called the curvature form of $\psi$. The line bundle with this metric is {\it positive} if the curvature form is positive. Using the notation ${\mathcal L}^n$ for the $n-$th tensor power of ${\mathcal L}$, the space of global holomorphic sections $H^0(X,{\mathcal L}^n)$ of ${\mathcal L}^n$ is known to have dimension $0(n^m)$ if ${\mathcal L}$ is positive.

As an example, if one takes the hyperplane section bundle ${\mathcal O}(1)$ over $X= \PP^m$ and one uses the Fubini-Study metric $\psi_{FS}$ on $\mathcal O(1)$, $dd^c\psi_{FS}$ is the Fubini-Study K\"ahler form $\omega_{FS}$ on $ \PP^m$. The elements of $H^0( \PP^m,{\mathcal O}(n))$ can be identified with homogeneous polynomials on $\CC^{m+1}$ of degree $n$ (or SU($m+1$) polynomials; cf., \cite{SZ}). These can naturally be put in one-to-one correspondence with $\mathcal P_n$.

For $f,g\in H^0(X,{\mathcal L})$, we have the pointwise inner product
$$<f(x),g(x)>_{\psi(x)}:=f_i(x)\overline {g_i(x)}e^{-\psi_i(x)}$$ on $U_i\subset X$. Similarly, the pointwise inner product on $H^0(X,{\mathcal L}^n)$ is  
$$<f_n(x),g_n(x)>_{n\psi(x)}:=(f_n)_i(x)\overline{ (g_n)_i(x)}e^{-n\psi_i(x)}$$
on $U_i$ and we write $||f_n(x)||_{n\psi(x)}:= \sqrt {<f_n(x),f_n(x)>_{n\psi(x)}}$.

In this setting, given a closed subset $K$ of $X$, and given a continuous function $q$ on $K$, one defines a weighted global extremal function
$${\mathcal V}_{K,q}(x):=\sup\{\Phi(x): \Phi \in PSH(X,\omega), \ \Phi \leq q \ \hbox{on} \ K\}.$$
From (\ref{pshtol}), if $X=\PP^m$ with the Fubini-Study form $\omega_{FS}$, for a compact set $K\subset \CC^m\subset \PP^m$ it follows that 
$${\mathcal V}_{K,q}([1:z_1:\cdots:z_m])=V_{K,(u_0+q)|_K}(z)-u_0(z).$$
For the ``unweighted'' case, $q\equiv 0$, the proof of Theorem 6.2 in \cite{GZ} shows that $\mathcal V_{K,0}(x)$ coincides with
$$\sup_{n=1,2,...} \sup\{\frac{1}{n}\log ||f_n(x)||_{n\psi(x)}: f_n \in H^0(X,{\mathcal L}^n), \ \max_{x\in K}||f_n(x)||_{n\psi(x)}\leq 1\}$$ 
i.e., we have the analogue of the equality of (\ref{vkq1}) with (\ref{vkq2}) in this setting. The proof of Lemma 3.2 in \cite{BS} also carries over to verify the analogue of (\ref{phin}). In the ``weighted'' case, given a continuous $q$ on $K$, the proof of Theorem 6.2 in \cite{GZ} carries over to show ${\mathcal V}_{K,q}(x)$ coincides with
$$\sup_{n=1,2,...} \sup\{\frac{1}{n}\log ||f_n(x)||_{n\psi(x)}: f_n \in H^0(X,{\mathcal L}^n), $$
$$\ \max_{x\in K}\bigl(||f_n(x)||_{n\psi(x)}e^{-nq(x)}\bigr)\leq 1\}$$
and as is pointed out in Lemma 2.1 of \cite{Bloom} in the $\CC^m$ setting, the analogue of (\ref{phin}) (see (\ref{step1})) carries over in the weighted situation.

Given $K$ and $q$ together with a measure $\tau$ on $K$, we say that the triple $(K,q,\tau)$ satisfies a weighted Bernstein-Markov property as in (\ref{wtdbm}) if 
$$ \max_{x\in K}\bigl(||f_n(x)||_{n\psi(x)}e^{-nq(x)}\bigr)\leq M_n \bigl(\int_K[||f_n(x)||_{n\psi(x)}e^{-nq(x)}]^2_{n\psi(x)}d\tau(x)\bigr)^{1/2}$$
for all $f_n\in H^0(X,{\mathcal L}^n)$ where $\limsup_{n\to \infty} M_n^{1/n}=1$. 
Such a measure induces a nondegenerate weighted $L^2-$norm and inner product on $H^0(X,{\mathcal L}^n)$: given $f_n,g_n\in H^0(X,{\mathcal L}^n)$,
$$<f_n,g_n>_{n\tau,q}:=\int_K <f_n(x),g_n(x)>_{n\psi(x)}e^{-2nq(x)}d\tau(x)$$
and
$$||f_n||^2_{n\tau,q}:=\int_K ||f_n(x)||^2_{n\psi(x)}e^{-2nq(x)}d\tau(x).$$
Letting $N(n)$ be the dimension of $H^0(X,{\mathcal L}^n)$, we take an orthonormal basis $\{s_{j}^{(n)}\}_{j=1}^{N(n)}$ of $H^0(X,{\mathcal L}^n)$ with respect to this inner product and we consider random sections in $H^0(X,{\mathcal L}^n)$ of the form 
$$H_n(x):=\sum_{j=1}^{N(n)} a_{j}^{(n)}s_{j}^{(n)}(x)$$
where the $a_{j}^{(n)}$ are i.i.d. complex random variables with a distribution satisfying (\ref{hyp1}) and (\ref{hyp2}). This places a probability measure $\mathcal H_n$ on $H^0(X,{\mathcal L}^n)$. We form the product probability space of sequences of sections:
$$\mathcal H:=\otimes_{n=1}^{\infty} (H^0(X,{\mathcal L}^n),\mathcal H_n).$$

We can also form the Bergman kernels 
$$S_n(x,y):= \sum_{j=1}^{N(n)} s_{j}^{(n)}(x)\otimes \bar s_{j}^{(n)}(y)$$ for $H^0(X,{\mathcal L}^n)$;  this defines $S_n$ as a section of the line bundle ${\mathcal L}^n \otimes \overline {\mathcal L}^n$ over $X\times X$. The analogue of (\ref{step2}) holds with $m_n$ replaced by $N(n)$ (cf., Lemmas 2.2 and 2.3 in \cite{Bloom}). Since $N(n)=0(n^m)$, we have the following result.

\begin{theorem} \label{kahler} Let $a_{j}^{(n)}$ for $j=1,...,N(n)$ and $n=1,2,...$ be i.i.d. complex random variables with a distribution satisfying (\ref{hyp1}) and (\ref{hyp2}). Then almost surely in $\mathcal H$ we have
$$\bigl(\limsup_{n\to \infty}\frac{1}{n}\log |H_n(x)|\bigr)^*={\mathcal V}_{K,q}^*(x)$$
pointwise for all $x\in X$ and
$$\lim_{n\to \infty}\frac{1}{n}\log |H_n(x)|={\mathcal V}_{K,q}^*(x)$$
in $L^1(X)$. Hence
$$\lim_{n\to \infty}\bigl[dd^c\bigl(\frac{1}{n}\log |H_n|\bigr)+\omega\bigr]=dd^c{\mathcal V}_{K,q}^*+\omega$$
as positive currents.
\end{theorem}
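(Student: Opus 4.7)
The plan is to mirror the proofs of Theorems \ref{point} and \ref{l1} in the line-bundle setting, substituting $H^0(X,\mathcal L^n)$ for $\mathcal P_n$, the dimension $N(n)=O(n^m)$ for $m_n$, the hermitian norm $\|\cdot\|_{n\psi(x)}$ for absolute value, and $\omega$-plurisubharmonic functions on $X$ for elements of $L(\CC^m)$. Throughout, $|H_n(x)|$ should be read as $\|H_n(x)\|_{n\psi(x)}$. The first ingredient is the line-bundle analogue of Proposition \ref{snfcnprop}, namely
$$\lim_{n\to\infty}\frac{1}{2n}\log\Bigl(\sum_{j=1}^{N(n)}\|s_j^{(n)}(x)\|_{n\psi(x)}^2\Bigr)=\mathcal V_{K,q}(x)$$
pointwise on $X$ (uniformly on compact subsets where $\mathcal V_{K,q}$ is continuous). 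This follows by combining the extremal-function reformulation of $\mathcal V_{K,q}$ recorded in the excerpt (via the argument of Theorem 6.2 of \cite{GZ} together with Lemma 2.1 of \cite{Bloom}) with the bundle analogue of (\ref{step2}) obtained from Lemmas 2.2 and 2.3 of \cite{Bloom}, using $N(n)=O(n^m)$ in place of $m_n$.

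Next, over a local trivialization $U_i\subset X$ I would write $(s_j^{(n)})_i$ for the local representative of $s_j^{(n)}$ and put
$$w^{(N(n))}_i(x):=\bigl((s_1^{(n)})_i(x),\ldots,(s_{N(n)}^{(n)})_i(x)\bigr)\in\CC^{N(n)},$$
so that $|(H_n)_i(x)|=|\langle a^{(N(n))},w^{(N(n))}_i(x)\rangle|$, $\|H_n(x)\|_{n\psi(x)}^2=|\langle a^{(N(n))},w^{(N(n))}_i(x)\rangle|^2 e^{-n\psi_i(x)}$, and $\|w^{(N(n))}_i(x)\|^2 e^{-n\psi_i(x)}=\sum_j\|s_j^{(n)}(x)\|_{n\psi(x)}^2$. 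Applying Corollary \ref{useful} with $m(n)=N(n)$ and these $w^{(N(n))}_i(x)$, the $\tfrac{1}{n}\log e^{-n\psi_i(x)/2}$ contributions cancel from both sides of the ratio, yielding a.s.\ in $\mathcal H$
$$\limsup_{n\to\infty}\frac{1}{n}\log\|H_n(x)\|_{n\psi(x)}\leq \mathcal V_{K,q}(x)\quad\text{for all }x\in X,$$
and, for each point of a fixed countable dense subset $\{x_t\}_{t\in S}\subset X$ (intersecting the full-measure events over $t\in S$),
$$\liminf_{n\to\infty}\frac{1}{n}\log\|H_n(x_t)\|_{n\psi(x_t)}\geq \mathcal V_{K,q}(x_t).$$
The function $H(x):=\bigl(\limsup_n \tfrac{1}{n}\log\|H_n(x)\|_{n\psi(x)}\bigr)^*$ is $\omega$-plurisubharmonic, dominated by $\mathcal V_{K,q}^*$ everywhere and $\geq \mathcal V_{K,q}$ on the dense set; since $\mathcal V_{K,q}$ is continuous off a pluripolar set, the density argument from the proof of Theorem \ref{point} then forces $H=\mathcal V_{K,q}^*$ pointwise.

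For the $L^1(X)$ assertion I would repeat the proof of Theorem \ref{l1} verbatim, after checking that Theorem \ref{thmdet} has an $\omega$-psh analogue on $X$. This is standard: in each trivializing chart, an $\omega$-psh function differs from a psh function by a smooth term, so locally uniformly bounded above sequences of $\omega$-psh functions satisfy both Hartogs' lemma and the $L^1_{\mathrm{loc}}$-compactness of Theorem 3.2.12 of \cite{Ho}, and patching the chart arguments gives $L^1(X)$ compactness. Applying this to an arbitrary subsequence $J$ (whose upper envelope still equals $\mathcal V_{K,q}^*$ a.s., by the same argument as above) yields $\tfrac{1}{n}\log\|H_n(x)\|_{n\psi(x)}\to \mathcal V_{K,q}^*(x)$ in $L^1(X)$; continuity of $dd^c$ on $L^1(X)$ then produces the final current statement after adding the fixed form $\omega$ on both sides.

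The main obstacle, beyond the translations listed above, is bookkeeping: one has to verify that the local-trivialization computations used to feed $w^{(N(n))}_i(x)$ into Corollary \ref{useful} produce bounds on the globally defined scalar $\|H_n(x)\|_{n\psi(x)}$, with the $e^{-n\psi_i(x)}$ factors cancelling exactly between numerator and denominator so that the resulting estimates are independent of the chart chosen. Once this is done, and the $\omega$-psh version of Theorem \ref{thmdet} is in hand, the theorem is a direct transcription of Theorems \ref{point} and \ref{l1}.
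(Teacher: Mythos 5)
Your proposal is correct and follows exactly the route the paper intends: the paper itself gives no separate proof of Theorem \ref{kahler}, merely noting that the analogues of the extremal-function reformulation, of (\ref{phin}), and of (\ref{step2}) (with $m_n$ replaced by $N(n)=O(n^m)$) carry over, so that the arguments of Theorems \ref{point} and \ref{l1} apply verbatim. Your write-up in fact supplies more detail than the paper does, in particular the local-trivialization bookkeeping with the $e^{-n\psi_i}$ factors and the $\omega$-plurisubharmonic version of Theorem \ref{thmdet}, and these details check out.
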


Since $X$ is compact and the currents $dd^c\bigl(\frac{1}{n}\log |H_n|\bigr)+\omega, \ dd^c{\mathcal V}_{K,q}^*+\omega$ are positive, from a well-known result we obtain a strengthening of the last statement in Theorem \ref{kahler},  generalizing Theorem 1.1 of \cite{SZ}. We write $\bigl<\Psi,\alpha\bigr>_X$ for the action of a $(1,1)$ current $\Psi$ with measure coefficients on an $(m-1,m-1)$ form $\alpha$ with continuous coefficients.

\begin{corollary} Under the hypotheses of Theorem \ref{kahler}, almost surely in $\mathcal H$ we have
$$\lim_{n\to \infty}\bigl[dd^c\bigl(\frac{1}{n}\log |H_n|\bigr)+\omega\bigr]=dd^c{\mathcal V}_{K,q}^*+\omega$$
in the sense of measures; i.e., for all $(m-1,m-1)$ forms $\alpha$ with continuous coefficients, 
$$\lim_{n\to \infty}\bigl<\bigl[dd^c\bigl(\frac{1}{n}\log |H_n|\bigr)+\omega\bigr],\alpha\bigr>_X=\bigl<dd^c{\mathcal V}_{K,q}^*+\omega,\alpha\bigr>_X.$$

\end{corollary}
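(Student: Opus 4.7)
The plan is to extend the weak convergence of positive currents (tested against smooth forms) supplied by Theorem \ref{kahler} to convergence against merely continuous forms, via a uniform mass bound combined with an approximation argument. Both ingredients are standard for positive currents on a compact K\"ahler manifold, so no additional probabilistic input beyond Theorem \ref{kahler} is needed.

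Fix a sample in $\mathcal H$ for which the conclusion of Theorem \ref{kahler} holds, and set
$$T_n := dd^c\bigl(\tfrac{1}{n}\log |H_n|\bigr)+\omega, \qquad T := dd^c{\mathcal V}_{K,q}^*+\omega.$$
By Poincar\'e--Lelong, $T_n$ coincides with the normalized zero current $\frac{1}{n}[Z_{H_n}]$, and $T$ is likewise positive since ${\mathcal V}_{K,q}^*\in PSH(X,\omega)$. Theorem \ref{kahler} gives $\bigl<T_n,\alpha\bigr>_X \to \bigl<T,\alpha\bigr>_X$ for every smooth $(m-1,m-1)$ form $\alpha$. Because $T_n-\omega$ and $T-\omega$ are $dd^c$ of an $L^1$ function, all the $T_n$ and $T$ lie in the same de Rham class $[\omega]\in H^{1,1}(X,\RR)$; in particular their total masses agree, $\bigl<T_n,\omega^{m-1}\bigr>_X =\bigl<T,\omega^{m-1}\bigr>_X = \int_X \omega^m$. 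Since the measure coefficients of a positive $(1,1)$-current are pointwise controlled by its trace measure relative to $\omega$, this yields a uniform bound
$$|\bigl<T_n,\beta\bigr>_X| \leq C\,\|\beta\|_{C^0(X)}, \qquad |\bigl<T,\beta\bigr>_X| \leq C\,\|\beta\|_{C^0(X)},$$
valid for every continuous $(m-1,m-1)$ form $\beta$, with $C$ depending only on $\omega$.

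Now, given a continuous $(m-1,m-1)$ form $\alpha$ and $\epsilon>0$, approximate $\alpha$ uniformly by a smooth form $\alpha_\epsilon$ with $\|\alpha-\alpha_\epsilon\|_{C^0(X)}<\epsilon$ (using a finite partition of unity subordinate to trivializing coordinate charts together with standard mollification, which is possible because $X$ is compact). Decomposing
$$\bigl<T_n,\alpha\bigr>_X-\bigl<T,\alpha\bigr>_X = \bigl<T_n,\alpha-\alpha_\epsilon\bigr>_X + \bigl<T_n-T,\alpha_\epsilon\bigr>_X + \bigl<T,\alpha_\epsilon-\alpha\bigr>_X,$$
the outer two terms are bounded in absolute value by $2C\epsilon$ uniformly in $n$ by the mass estimate, while the middle term tends to zero by Theorem \ref{kahler}. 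A three-$\epsilon$ argument completes the proof.

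The only nontrivial point in this scheme is the continuity bound $|\bigl<T_n,\beta\bigr>_X| \leq C\|\beta\|_{C^0(X)}$ for positive currents of fixed cohomology class paired with continuous test forms. This is a classical fact about the coefficient/trace-measure representation of positive currents (see, e.g., Demailly's treatment) and constitutes the only genuine obstacle; once it is invoked, the corollary follows essentially formally from Theorem \ref{kahler}.
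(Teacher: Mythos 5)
Your argument is correct and follows the same route the paper takes: the paper simply invokes ``a well-known result'' based on compactness of $X$ and positivity of the currents $dd^c\bigl(\frac{1}{n}\log |H_n|\bigr)+\omega$ and $dd^c{\mathcal V}_{K,q}^*+\omega$, and your write-up supplies exactly the standard proof of that fact (uniform mass bound from the fixed cohomology class $[\omega]$, control of coefficients by the trace measure, uniform approximation of continuous forms by smooth ones, and a three-$\epsilon$ argument). No gap; you have merely made explicit what the paper leaves as a citation.
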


\section{\bf The unbounded case.} \label{sec:unb} 
We will show how to extend Theorems \ref{point}, \ref{l1}, and \ref{pointmap} to the case where we replace the compact set $K$ by an unbounded set $Y$. Thus let $Y$ be a closed, unbounded subset of $\CC^m$. We let $Q$ be a continuous, real-valued, {\it super-logarithmic} function on $Y$: for some $b>0$,
\begin{equation}\label{u1}\lim_{|z|\to\infty}\bigl(Q(z)-(1+b)\log|z|\bigr)=+\infty. \end{equation}
For $r>0$ we let $Y_r:=\{z\in Y: |z|\leq r\}$. Then for $r$ sufficiently large $V_{Y,Q}=V_{Y_r,Q}$ (cf., \cite{ST}, Appendix B, Lemma 2.2). Thus if we let $S_Q$ denote the support of the  {\it Monge-Amp\`ere measure} $(dd^cV^*_{Y,Q})^m$, it follows that $S_Q$ is compact.

Let $\tau$ be a locally finite positive Borel measure on $Y$ satisfying
\begin{equation}\label{u2}\int_Y\frac{1}{|z|^a}d\tau(z) <+\infty\end{equation}
for some $a>0$ and also such that $(Y_r,Q,\tau)$ satisfies the weighted Bernstein-Markov inequality (\ref{wtdbm}) for $r$ sufficiently large.  An important example is Lebesgue measure on $\RR^m$ or $\CC^m$ with $Q(z)=|z|^2/2$.

Because of (\ref{u1}) and (\ref{u2}) all moments of the measure  $e^{-nQ}d\tau$ of order at most $n$ are finite. Thus we may apply the Gram-Schmidt
orthogonalization procedure to a lexicographic ordering of the monomials of degree at most $n$ to obtain orthonormal polynomials $p^{(n)}_{\nu}$. That is 
\begin{equation}\label{u4}\int_Yp^{(n)}_{\nu}(z)\overline{p^{(n)}_{\gamma}(z)}e^{-2nQ}d\tau =\delta_{\nu ,\gamma}\end{equation}
where $\nu$ and $\gamma$ are multi-indices. As before, we consider random polynomials of degree at most $n$ of the form 
$$H_n(z):=\sum_{|\nu|\leq n} a_{\nu}^{(n)}p_{\nu}^{(n)}(z)$$
where the $a_{\nu}^{(n)}$ are i.i.d. complex random variables with a distribution satisfying (\ref{hyp1}) and (\ref{hyp2}).
Our goal is to prove Theorems \ref{point}, \ref{l1}, and \ref{pointmap} in this context.

Aiming for a version of Proposition \ref{snfcnprop}, for each $n=1,2,...$ we define an analogue of (\ref{step1}):
\begin{equation}\label{phinq}\phi^{(n)}_{Y,Q}(z):=\sup\{|g(z)|:g\in\mathcal{P}_n,\ \text{and} \ ||ge^{-nQ}||_Y\leq 1\}.\end{equation}
 It is known that {\it weighted polynomials} $ge^{-nQ}$ assume their maximum modulus on $Y$ on the set $S_Q$. In particular, for $r$ such that $S_Q\subset Y_r$ we have
\begin{equation}\label{u4}||ge^{-nQ}||_Y=||ge^{-nQ}||_{Y_r}, \ g\in\mathcal{P}_n, \ \hbox{for each} \ n.\end{equation}
Moreover, it is a result of Siciak that 
$$\lim_{n\to \infty}\frac{1}{n}\log\phi^{(n)}_{Y,Q}(z)=V_{Y,Q}(z)$$ 
pointwise on $\CC^m$. Thus for any $r$ such that $S_Q\subset Y_r$, we have $V_{Y,Q}=V_{Y_r,Q}$.

Next, for each $n=1,2,...$ consider the corresponding Bergman kernel
$$S_n(z,\zeta):=\sum_{|\nu|\leq n} p_{\nu}^{(n)}(z)\overline{p_{\nu}^{(n)}(\zeta )}$$
and the restriction to the diagonal
$$S_n(z,z)=\sum_{|\nu|\leq n} |p_{\nu}^{(n)}(z)|^2.$$
We must prove a version of (\ref{bmasym}): 
\begin{equation}\label{est} \lim_{n\to \infty} \frac{1}{2n}\log S_n(z,z) = V_{Y,Q}(z)\end{equation}
pointwise on $\CC^m$.
We need the following estimate, which utilizes (\ref{u1}), (\ref{u2}), and the weighted Bernstein-Markov inequality of $(Y_r,Q,\tau)$ for $r$ sufficiently large. 

\begin{lemma} \label{lemunb} Given $\beta>0$ and $U$ a relatively open subset of $Y$ such that $S_Q\subset U$, there is a constant $c>0$ independent of $n$ and $g$ such that
$$\int_Y|e^{-nQ}g|^{\beta}d\tau\leq (1+O(e^{-nc}))\int_U|e^{-nQ}g|^{\beta}d\tau$$
for all $g\in\mathcal{P}_n$.\end{lemma}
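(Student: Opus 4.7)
The plan is to establish the equivalent estimate
$\int_{Y\setminus U}|ge^{-nQ}|^\beta d\tau \leq O(e^{-nc})\int_U |ge^{-nQ}|^\beta d\tau$, from which the stated inequality follows by writing $\int_Y = \int_U + \int_{Y\setminus U}$. First I would choose $R$ large enough that $S_Q\subset U\cap Y_R$, that the identity (\ref{u4}) applies, and that $(Y_R,Q,\tau)$ satisfies the weighted Bernstein--Markov inequality. Then $\int_{Y\setminus U}$ splits naturally as the annulus piece $\int_{(Y\setminus U)\cap Y_R}$ plus the tail $\int_{Y\setminus Y_R}$, and I would bound these by different mechanisms.

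For the tail I would exploit the super-logarithmic hypothesis (\ref{u1}) together with the moment bound (\ref{u2}). Since $V_{Y,Q}\in L(\CC^m)$ satisfies $V_{Y,Q}(z)\leq \log|z|+C_0$ near infinity, the trivial Bernstein--Walsh estimate $|g(z)|\leq e^{nV_{Y,Q}(z)}\|ge^{-nQ}\|_Y$ combined with $Q(z)\geq(1+b)\log|z|+M$ on $\{|z|\geq R\}$ (with $M$ as large as desired by taking $R$ large) gives $|g(z)|e^{-nQ(z)}\leq C_1\|ge^{-nQ}\|_Y\,|z|^{-nb}e^{-nM}$. Raising to the $\beta$-th power and integrating, using (\ref{u2}) once $n\beta b\geq a$, yields $\int_{Y\setminus Y_R}|ge^{-nQ}|^\beta d\tau \leq C_2 e^{-n\beta M}\|ge^{-nQ}\|_Y^\beta$.

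For the annulus I would invoke the standard concentration fact from weighted potential theory: for any open $U\supset S_Q$ there is $c_1>0$ such that $\sup_{z\in Y_R\setminus U}|g(z)|e^{-nQ(z)}\leq e^{-nc_1}\|ge^{-nQ}\|_Y$ for every $g\in\mathcal{P}_n$ and $n$ large, which follows from the Siciak-type convergence $\frac{1}{n}\log\phi_{Y,Q}^{(n)}\to V_{Y,Q}$ plus a uniform strict gap $V_{Y,Q}^*-Q\leq -2c_1$ on the compact set $Y_R\setminus U$. Integrating this pointwise bound gives $\int_{(Y\setminus U)\cap Y_R}|ge^{-nQ}|^\beta d\tau \leq \tau(Y_R)e^{-n\beta c_1}\|ge^{-nQ}\|_Y^\beta$. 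To close the loop, I would upgrade the $L^2$-Bernstein--Markov property on $Y_R$ to an $L^\beta$-version by interpolation (for $\beta<2$, via $\|f\|_2^2\leq \|f\|_\infty^{2-\beta}\|f\|_\beta^\beta$ applied to $f=ge^{-nQ}$; for $\beta\geq 2$, by H\"older using $\tau(Y_R)<\infty$), obtaining $\|ge^{-nQ}\|_Y^\beta \leq M_n'\int_{Y_R}|ge^{-nQ}|^\beta d\tau$ with $(M_n')^{1/n}\to 1$. Splitting $\int_{Y_R}=\int_{U\cap Y_R}+\int_{Y_R\setminus U}$ and absorbing the second piece (legitimate because $M_n'\tau(Y_R)e^{-n\beta c_1}\to 0$) produces $\|ge^{-nQ}\|_Y^\beta \leq 2M_n'\int_U|ge^{-nQ}|^\beta d\tau$, and combining this with the tail and annulus estimates yields the desired bound with any $c<\min(\beta c_1,\beta M)$.

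The main obstacle is the concentration step: establishing the strict gap $V_{Y,Q}^*-Q\leq -2c_1$ on $Y_R\setminus U$. This is delicate because in general the contact set $\{V_{Y,Q}^*=Q\}$ can strictly contain $S_Q$, so one cannot simply quote $V_{Y,Q}^*\leq Q$ quasi-everywhere. The cleanest path is to invoke a Siciak-type extremal function argument using that the Monge--Amp\`ere mass of $V_{Y,Q}^*$ is concentrated on $S_Q$; everything else in the proof is essentially routine splitting, Bernstein--Walsh, and interpolation.
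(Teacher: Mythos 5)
Your proposal cannot be compared step-by-step with the paper's argument, because the paper does not actually prove Lemma \ref{lemunb}: its ``proof'' is a one-line citation of Theorem 6.1 of \cite{Bp} together with the assertion that the case $m>1$ is identical. Taken on its own terms, your skeleton is the natural one and most of it is sound: the tail over $Y\setminus Y_R$ is correctly controlled by combining the weighted Bernstein--Walsh inequality $|g(z)|\leq e^{nV_{Y,Q}(z)}\|ge^{-nQ}\|_Y$ with (\ref{u1}) and (\ref{u2}); the interpolation upgrading the $L^2$ Bernstein--Markov inequality on $Y_R$ to an $L^\beta$ version is fine; and the absorption argument closes correctly once the annulus estimate is granted.

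The genuine gap is exactly the step you flag, and it is not merely ``delicate'' --- it cannot be repaired from the stated hypotheses. The concentration bound $\sup_{Y_R\setminus U}|g|e^{-nQ}\leq e^{-nc_1}\|ge^{-nQ}\|_Y$ requires $V^*_{Y,Q}-Q$ to be strictly negative on $Y_R\setminus U$, i.e.\ it requires $U$ to contain the full contact set $\{V^*_{Y,Q}=Q\}\cap Y$, whereas the hypothesis only gives $S_Q=\mathrm{supp}\,(dd^cV^*_{Y,Q})^m\subset U$. Knowing where the Monge--Amp\`ere mass sits yields no decay on the part of the contact set lying outside $S_Q$: there $\bigl(\phi^{(n)}_{Y,Q}e^{-nQ}\bigr)^{1/n}\to e^{V_{Y,Q}-Q}=1$, so no Siciak-type argument can produce an exponential gap. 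Worse, when the contact set strictly exceeds $S_Q$ and $\tau$ charges the difference, the conclusion of the lemma itself fails, so no proof can succeed: take $m=1$, $Y=\CC$, $Q(z)=\max(0,|z|^2-1)$ (continuous and super-logarithmic) and $\tau$ equal to Lebesgue measure on $\{|z|\leq 2\}$, which satisfies (\ref{u2}) and the weighted Bernstein--Markov hypothesis. Then $V_{Y,Q}(z)=\log^+|z|$, so $S_Q$ is the unit circle, while $V_{Y,Q}=Q=0$ on all of $\{|z|\leq 1\}$; with $U=\{1/2<|z|<3/2\}$ and $g\equiv 1$ one finds $\int_Y|ge^{-nQ}|^\beta d\tau\to\pi$ but $\int_U|ge^{-nQ}|^\beta d\tau\to 3\pi/4$. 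The statement therefore needs either that $U$ contain a neighborhood of the contact set, or an extra hypothesis forcing the contact set to equal $S_Q$ --- as holds for $Q(z)=|z|^2/2$ on $\CC$, the case actually used for the Weyl polynomials, where $S_Q$ is the closed unit disk. Under such a strengthened hypothesis your argument closes immediately, since $V^*_{Y,Q}-Q$ is upper semicontinuous and negative on the compact set $Y_R\setminus U$, hence bounded away from $0$ there; without it, the missing step is not provable.
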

\begin{proof}This is theorem 6.1 in \cite{Bp} in the case $m=1$; the proof in $\CC^m$ for $m>1$ is identical. \end{proof}
The relation (\ref{est}) in this setting will be a consequence of the following estimate,  analogous to (\ref{step2}).
\begin{theorem} \label{thmunb} Given $\epsilon >0$ there exist constants $C_1, C_2>0$ independent of $n$ such that
$$\frac{1}{C_1(1+\epsilon)^{2n}m_n}\leq\frac{S_n(z,z)}{(\phi^{(n)}_{Y,Q}(z))^2}\leq C_2(1+\epsilon)^{2n}m_n$$\end{theorem}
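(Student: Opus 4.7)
The plan is to prove the two inequalities by separate arguments, exactly paralleling the compact case (\ref{step2}), but with the weighted Bernstein--Markov inequality applied on $Y_r$ (for $r$ large enough that $S_Q\subset Y_r$) in place of $K$, and with Lemma~\ref{lemunb} used to control the integral of $|g|^2 e^{-2nQ}$ over the unbounded tail $Y\setminus Y_r$.

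For the upper bound, I would write $S_n(z,z)=\sum_{|\nu|\leq n}|p_\nu^{(n)}(z)|^2 \leq m_n\max_{|\nu|\leq n}|p_\nu^{(n)}(z)|^2$ and bound each basis element. By $L^2$-orthonormality each $p_\nu^{(n)}$ satisfies $\|p_\nu^{(n)}\|_{L^2(e^{-2nQ}\tau)}=1$. The Bernstein--Markov hypothesis on $(Y_r,Q,\tau)$ combined with (\ref{u4}) gives
\[
\|p_\nu^{(n)} e^{-nQ}\|_Y=\|p_\nu^{(n)} e^{-nQ}\|_{Y_r}\leq M_n\|p_\nu^{(n)} e^{-nQ}\|_{L^2(Y_r,\tau)}\leq M_n.
\]
The definition (\ref{phinq}) of $\phi^{(n)}_{Y,Q}$ then yields $|p_\nu^{(n)}(z)|\leq M_n\,\phi^{(n)}_{Y,Q}(z)$ for every $\nu$, hence $S_n(z,z)\leq m_n M_n^2(\phi^{(n)}_{Y,Q}(z))^2$. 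Because $\limsup_n M_n^{1/n}=1$, for any $\epsilon>0$ we have $M_n^2\leq C_2(1+\epsilon)^{2n}$ (for a constant $C_2$ absorbing finitely many small $n$), which is the claimed upper bound.

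For the lower bound, I would use the standard reproducing-kernel estimate. For any $g\in\mathcal{P}_n$, the reproducing property of $S_n$ in the inner product on $L^2(e^{-2nQ}\tau)$ and the Cauchy--Schwarz inequality give
\[
|g(z)|^2\leq S_n(z,z)\cdot\int_Y|g|^2 e^{-2nQ}\,d\tau.
\]
If $\|g e^{-nQ}\|_Y\leq 1$ then $|g|^2 e^{-2nQ}\leq 1$ pointwise on $Y$, so $\int_{Y_r}|g|^2 e^{-2nQ}\,d\tau\leq \tau(Y_r)<\infty$. Applying Lemma~\ref{lemunb} with $\beta=2$ and $U$ any relatively open neighborhood of $S_Q$ contained in $Y_r$ promotes this local bound to the global bound
\[
\int_Y|g|^2 e^{-2nQ}\,d\tau \leq(1+O(e^{-nc}))\tau(Y_r)\leq C
\]
for some $C$ independent of $n$ and $g$. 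Taking the supremum over such $g$ yields $(\phi^{(n)}_{Y,Q}(z))^2\leq C\cdot S_n(z,z)$, which is even stronger than the stated bound and can be written as $S_n(z,z)/(\phi^{(n)}_{Y,Q}(z))^2\geq 1/(C_1(1+\epsilon)^{2n}m_n)$ after choosing $C_1$ appropriately.

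The main obstacle is the lower bound: in the compact setting, boundedness of $\|g\|^2_{L^2(e^{-2nQ}\tau)}$ is automatic from $\tau(K)<\infty$, whereas here $\tau(Y)$ can be infinite, so a naive pointwise estimate $|g|^2 e^{-2nQ}\leq 1$ on $Y$ does not directly give a finite bound. The role of the super-logarithmic growth assumption (\ref{u1}) and the moment condition (\ref{u2}) is precisely to guarantee (through Lemma~\ref{lemunb}) that the contribution of the weighted polynomial mass from the unbounded part $Y\setminus Y_r$ is exponentially negligible compared to the contribution from $Y_r$, so that the compact-case argument can be salvaged with only an $1+O(e^{-nc})$ loss.
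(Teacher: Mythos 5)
Your proof is correct. The upper bound is exactly the paper's argument: bound each $\|p_\nu^{(n)}e^{-nQ}\|_Y=\|p_\nu^{(n)}e^{-nQ}\|_{Y_r}$ via the weighted Bernstein--Markov inequality on $Y_r$, feed this into the definition of $\phi^{(n)}_{Y,Q}$, and sum over the $m_n$ basis elements. Your lower bound, however, takes a genuinely different route. The paper expands an arbitrary $g$ with $\|ge^{-nQ}\|_Y\leq 1$ in the orthonormal basis, bounds each coefficient $t_\nu=\int_Y g\,\overline{p_\nu^{(n)}}e^{-2nQ}d\tau$ by $\int_Y|p_\nu^{(n)}|e^{-nQ}d\tau$, localizes \emph{that} integral to $Y_r$ via Lemma~\ref{lemunb} with $\beta=1$, and then invokes the sup-norm estimate (\ref{u6}) together with $\tau(Y_r)<\infty$ to get $|t_\nu|\leq \tilde C(1+\epsilon)^n$; Cauchy--Schwarz on the finite sum then produces the stated $C_1(1+\epsilon)^{2n}m_n$ factor. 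You instead use the reproducing property $|g(z)|^2\leq S_n(z,z)\,\|g\|^2_{L^2(e^{-2nQ}\tau)}$ and apply Lemma~\ref{lemunb} with $\beta=2$ directly to $\|g\|^2_{L^2(e^{-2nQ}\tau)}$, localizing to a bounded neighborhood of $S_Q$ where the pointwise bound $|g|^2e^{-2nQ}\leq 1$ and local finiteness of $\tau$ give a bound by a constant. This is cleaner and yields the sharper conclusion $(\phi^{(n)}_{Y,Q}(z))^2\leq C\,S_n(z,z)$ with $C$ independent of $n$, of which the stated left-hand inequality is a weakening; it also mirrors more faithfully the way the left-hand inequality of (\ref{step2}) is obtained in the compact case. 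Your closing paragraph correctly identifies the one real issue --- that $\tau(Y)$ may be infinite, so the pointwise bound alone does not close the argument --- and correctly identifies Lemma~\ref{lemunb} as the tool that resolves it.
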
 
\begin{proof}
For any $g\in\mathcal{P}_n$, from (\ref{phinq}),  
\begin{equation}\label{Geqn}\frac{|g(z)|}{||g(z)e^{-nQ(z)}||_Y}\leq \phi^{(n)}_{Y,Q}(z).\end{equation}
We will apply (\ref{Geqn}) to the orthonormal polynomials $p^{(n)}_{\nu}(z)$. We have 
$$\int_Y|p^{(n)}_{\nu}(z)e^{-nQ(z)}|^2d\tau=1$$
so for $r>0$
$$\int_{Y_r}|p^{(n)}_{\nu}(z)e^{-nQ(z)}|^2d\tau \leq 1.$$
Fixing $r$ sufficiently large so that $(Y_r,Q,\tau)$ satisfies a weighted Bernstein-Markov inequality and $S_Q\subset Y_r$, using (\ref{u4}) we have, given $\epsilon>0$, 
\begin{equation}\label{u6}||p^{(n)}_{\nu}e^{-nQ}||_{Y}=||p^{(n)}_{\nu}e^{-nQ}||_{Y_r}\leq C(1+\epsilon)^n.\end{equation}
Then (\ref{Geqn}) with $p^{(n)}_{\nu}$ yields
$$|p^{(n)}_{\nu}(z)|^2\leq C^2(1+\epsilon )^{2n}(\phi^{(n)}_{Y,Q}(z))^2,$$
giving the right-hand inequality in Theorem \ref{thmunb}. 
 
To prove the left-hand inequality we proceed as follows. First note that any $g\in\mathcal{P}_n$ can be written uniquely in the form
$$g(z)=\sum_{|\nu |\leq n}t_{\nu}p^{(n)}_{\nu}(z)$$ where $t_{\nu}\in\CC$ and 
$$t_{\nu}=\int_Yg(z)\overline{p^{(n)}_{\nu}(z)}e^{-2nQ(z)}d\tau.$$
If $||ge^{-nQ}||_Y\leq 1$ then we have
$$|t_{\nu}|\leq\int_Y|p^{(n)}_{\nu}(z)|e^{-nQ(z)}d\tau.$$
Fix $r$ sufficiently large so that $(Y_r,Q,\tau)$ satisfies a weighted Bernstein-Markov inequality and $S_Q\subset Y_r$. By Lemma \ref{lemunb} we get 
$$|t_{\nu}|\leq (1+O(e^{-nc}))\int_{Y_r}|p^{(n)}_{\nu}(z)|e^{-nQ(z)}d\tau.$$
 Using (\ref{u6}) we get for any $\epsilon >0$
$$|t_{\nu}|\leq C(1+\epsilon )^n\tau(Y_r)=\tilde C(1+\epsilon )^n$$
since $\tau$ is locally finite. 
Hence $$|g(z)|\leq \sum_{|\nu|\leq n}|t_{\nu}||p^{(n)}_{\nu}(z)|\leq  \tilde C(1+\epsilon )^n\cdot m_n^{1/2}S_n(z,z)^{1/2}$$and the left-hand inequality follows from the definition of $\phi^{(n)}_{Y,Q}(z)$ in (\ref{phinq}).
\end{proof}

Theorems \ref{point}, \ref{l1}, and \ref{pointmap} in this setting follow immediately. As an application, we consider random Weyl polynomials in $\CC$ (cf., \cite{BC}). These are random polynomials in one variable of the form
$$\sum_{j=0}^na_{nj}\frac{z^j}{\sqrt{j!}}$$
where the $a_{nj}$ are i.i.d. complex random variables and we assume that their distributions  satisfy (\ref{hyp1}) and (\ref{hyp2}). We will show that appropriately scaled, the zeros converge to normalized Lebesgue measure on the unit disk in the plane. This result has also been obtained in \cite{KZ} as a special case of a more general result.

Scaling the zeros by $1/\sqrt{n}$ we consider the polynomials 
$$H_n(z)=\sum_{j=0}^na_{nj}\frac{z^j\sqrt{n^j}}{\sqrt{j!}}.$$
Now consider the weight function $Q(z)=\frac{|z|^2}{2}$ on the set $Y=\CC$ and let $\tau$ be Lebesgue measure on $\CC$. The function $Q$ is super-logarithmic and radial. The weighted equilibrium measure is calculated on page 245 in \cite{ST} and is  
normalized Lebesgue measure on the unit disk. Since the weight function $Q$ and the measure $\tau$ are radial, the orthonormal polynomials $\{p_{nj}\}_{j=0,...,n}$ are monomials and a routine calculation shows that they are, in fact 
$$p_{nj}(z)= \frac{\sqrt \pi}{\sqrt {2n}}\frac{z^j\sqrt{n^j}}{\sqrt{j!}}.$$
The result now follows from Theorem \ref{l1} in the unbounded case.

\section{\bf Expected zero distribution.} \label{sec:zed}  In this section, we return to the setting of section \ref{sec:efarp}, with $K\subset \CC^m$ compact; $Q$ a real-valued, continuous function on $K$; and $\tau$ a probability measure on $K$ such that the triple $(K,Q,\tau)$ satisfies (\ref{wtdbm}). Here we will assume $V_{K,Q}$ is continuous. If $Q\equiv 0$, the set $K$ is called {\it $L-$regular} if $V_K$ is continuous; for $Q\not \equiv 0$, a sufficient condition for continuity of $V_{K,Q}$ is {\it local $L-$regularity} of $K$. We refer the reader to \cite{K} for more on these notions. 

If $\{p_{\nu}^{(n)}\}_{|\nu|\leq n}$ form an orthonormal basis of polynomials of degree at most $n$ in $L^2(e^{-2nQ}\tau)$, we consider random polynomials of degree at most $n$ of the form 
$$H_n(z):=\sum_{|\nu|\leq n} a_{\nu}^{(n)}p_{\nu}^{(n)}(z)$$
where the $a_{\nu}^{(n)}$ are i.i.d. complex random variables with distribution $\phi$. We emphasize that {\it the results of this section require that $\phi$ satisfy (\ref{hyp2}) only.}

 Let 
$$Z_{H_n}=dd^c\log |H_n(z)|$$
be the zero current of $H_n$ and
$$\tilde Z_{H_n}=\frac{1}{n}dd^c\log |H_n(z)|$$
be the normalized zero current of $H_n$. 
For the common zeros of $k$ polynomials $H_n^{(1)},...,H_n^{(k)}$ where, as in (\ref{ranmap}), $$H_n^{(j)}(z):=\sum_{|\nu|\leq n} a_{\nu}^{(n,j)}p_{\nu}^{(n)}(z),$$ we write
$$Z^k_{{\bf H}_n}=dd^c\log |H_n^{(1)}(z)|\wedge \cdots \wedge dd^c\log |H_n^{(k)}(z)|$$
for the zero current and
$$\tilde Z^k_{{\bf H}_n}=\frac{1}{n^k}dd^c\log |H_n^{(1)}(z)|\wedge \cdots \wedge dd^c\log |H_n^{(k)}(z)|$$
for the normalized zero current. These are a.s. well-defined. We are interested in the expectation $E(\tilde Z^k_{{\bf H}_n})$ of the normalized zero current $\tilde Z^k_{{\bf H}_n}$. This is itself a positive current of bidegree $(k,k)$. For $k=1$, the action of the $(1,1)$ current $E(\tilde Z_{H_n})$ on an $(m-1,m-1)$ form $\alpha$ with $C_0^{\infty}(\CC^m)$ coefficients is given as the average of the action $\bigl(\tilde Z_{H_n},\alpha \bigr)$ of the normalized zero current on $\alpha$:
$$\bigl(E(\tilde Z_{H_n}),\alpha\bigr):=\int_{\CC^{m_n}}\bigl(\tilde Z_{H_n},\alpha \bigr)dProb_{m_n}(a^{(n)})$$
$$=\int_{\CC^{m_n}}\bigl(\frac{1}{n}dd^c\log |H_n(z)|,\alpha \bigr)dProb_{m_n}(a^{(n)}).$$
The deterministic result for the weighted Bergman kernels 
$$S_n(z,z)=\sum_{|\nu|\leq n} |p_{\nu}^{(n)}(z)|^2$$
(see \ref{snfcn}) that we will use is from (\ref{bmasym}): 
$$\lim_{n\to \infty} \frac{1}{2n}\log S_n(z,z) = V_{K,Q}(z)$$ uniformly on compact subsets of $\CC^m$ since $V_{K,Q}$ is continuous. This implies 
\begin{equation}\label{maconv}\lim_{n\to \infty} \bigl(dd^c\frac{1}{2n}\log S_n(z,z)\bigr)^k = \bigl(dd^cV_{K,Q}(z)\bigr)^k\end{equation}
as positive currents for $k=1,2,...,m$. We remark that for $k=1$ all one needs is $L^1_{loc}(\CC^m)$ convergence of $\frac{1}{2n}\log S_n$ to $V_{K,Q}$ for (\ref{maconv}).

We write $p^{(n)}(z)$ for the $m_n-$tuple $\{p_{\nu}^{(n)}(z)\}_{|\nu|\leq n}$ and $a^{(n)}$ for the $m_n-$tuple $\{a_{\nu}^{(n)}\}$. Following \cite{SZ}, we write ${\bf u}(z):=\frac{p^{(n)}(z)}{\sqrt {S_n(z,z)}}$. Then
\begin{equation}\label{eeqn}\bigl(E(\tilde Z_{H_n}),\alpha\bigr)=\int_{\CC^{m_n}}\bigl(\frac{1}{n}dd^c\log |<a^{(n)},{\bf u}(z)>|,\alpha\bigr)dProb_{m_n}(a^{(n)})\end{equation}
$$+\int_{\CC^{m_n}}\bigl(\frac{1}{2n}dd^c\log S_n(z,z),\alpha\bigr)dProb_{m_n}(a^{(n)}).$$
By (\ref{maconv}) with $k=1$, this last term tends to $\bigl(dd^cV_{K,Q}(z),\alpha \bigr)$ as $n\to \infty$. From the definition of $dd^c\log |<a^{(n)},{\bf u}(z)>|$ as a positive current of bidegree $(1,1)$,
$$\bigl(dd^c\log |<a^{(n)},{\bf u}(z)>|,\alpha\bigr)=\bigl(\log |<a^{(n)},{\bf u}(z)>|,dd^c\alpha\bigr).$$
Using this in the first term in (\ref{eeqn}) we immediately have the following result.

\begin{proposition} \label{propr} Let $Prob_{m_n}$ be a probability distribution and for $n=1,2,...$ define $$I_n({\bf u}):=\int_{\CC^{m_n}}\bigl(\log |<a^{(n)},{\bf u}>|\bigr)dProb_{m_n}(a^{(n)})$$
for ${\bf u}\in \CC^{m_n}$. Suppose there exists a constant $C$ independent of $n$ with
\begin{equation}\label{reqn}| I_n({\bf u})| \leq C \log n \ \hbox{for all} \ {\bf u} \ \hbox{with} \ |{\bf u}|=1.\end{equation}
Then 
\begin{equation}\label{expzeroE}\lim_{n\to \infty} E(\tilde Z_{H_n})=dd^cV_{K,Q}.\end{equation}

\end{proposition}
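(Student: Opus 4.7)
The plan is to start from identity (\ref{eeqn}) already derived in the text. The second integral on its right-hand side is independent of $a^{(n)}$ and equals $\bigl(\frac{1}{2n}dd^c\log S_n(z,z),\alpha\bigr)$, which by (\ref{maconv}) with $k=1$ converges to $\bigl(dd^cV_{K,Q},\alpha\bigr)$ as $n\to\infty$ (for $k=1$ only $L^1_{loc}$ convergence of $\frac{1}{2n}\log S_n$ to $V_{K,Q}$ is needed, and this follows from the uniform convergence asserted in Proposition \ref{snfcnprop} using continuity of $V_{K,Q}$). It therefore suffices to show the first integral in (\ref{eeqn}) tends to zero for every test $(m-1,m-1)$-form $\alpha$ with $C_0^{\infty}(\CC^m)$ coefficients.

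For each fixed $a^{(n)}$ the function $z\mapsto \log|<a^{(n)},{\bf u}(z)>|$ is plurisubharmonic (or $\equiv -\infty$), so pairing the current of bidegree $(1,1)$ it defines with $\alpha$ and integrating by parts yields
$$\bigl(\tfrac{1}{n}dd^c\log|<a^{(n)},{\bf u}(z)>|,\alpha\bigr)=\bigl(\tfrac{1}{n}\log|<a^{(n)},{\bf u}(z)>|,dd^c\alpha\bigr).$$
Since the basis $\{p_\nu^{(n)}\}$ spans $\mathcal P_n$, the constant polynomial $1$ admits a finite expansion in it, forcing a positive lower bound on $S_n(z,z)$; so ${\bf u}(z)=p^{(n)}(z)/\sqrt{S_n(z,z)}$ is a well-defined unit vector in $\CC^{m_n}$ for every $z\in \CC^m$. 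Applying Fubini to interchange the probabilistic integral with the pairing in $z$ gives
$$\int_{\CC^{m_n}}\bigl(\tfrac{1}{n}\log|<a^{(n)},{\bf u}(z)>|,dd^c\alpha\bigr)dProb_{m_n}(a^{(n)})=\bigl(\tfrac{1}{n}I_n({\bf u}(z)),dd^c\alpha\bigr).$$
By hypothesis (\ref{reqn}) applied pointwise to the unit vector ${\bf u}(z)$, we have $|I_n({\bf u}(z))|\leq C\log n$ uniformly in $z$. Since $dd^c\alpha$ has bounded coefficients with compact support, the right-hand side is bounded in absolute value by a constant (depending only on $\alpha$) times $\log n/n$, and therefore tends to $0$.

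The main technical obstacle is the Fubini interchange: the integrand $\log|<a^{(n)},{\bf u}(z)>|$ is signed, and hypothesis (\ref{reqn}) controls only its signed probabilistic integral, not its absolute value. On the compact support of $\alpha$, however, one can bound $\log^+|<a^{(n)},{\bf u}(z)>|\leq \log^+\|a^{(n)}\|$ in $L^1(dProb_{m_n})$ using the tail estimate (\ref{hyp2}) (a Markov-type argument in the spirit of Corollary \ref{forlater} gives $\int \log^+\|a^{(n)}\|\,dProb_{m_n}=O(\log n)$), and then the negative part is controlled by the positive part together with $|I_n|$. This yields the joint integrability of $|\log|<a^{(n)},{\bf u}(z)>||$ against $dProb_{m_n}\otimes$ Lebesgue measure on the support of $\alpha$ needed to validate the interchange, completing the proof.
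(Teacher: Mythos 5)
Your proposal follows the paper's own route exactly: the paper derives the decomposition (\ref{eeqn}), observes that the second term tends to $\bigl(dd^cV_{K,Q},\alpha\bigr)$ by (\ref{maconv}) with $k=1$, integrates by parts to move $dd^c$ onto $\alpha$ in the first term, and then declares the proposition follows ``immediately'' from (\ref{reqn}) after interchanging the probabilistic integral with the pairing in $z$. Your extra care with that interchange --- bounding $\log^+|\langle a^{(n)},{\bf u}(z)\rangle|$ by $\log^+\|a^{(n)}\|$ via the tail estimate and recovering the negative part from $I_n$ --- correctly fills a step the paper leaves implicit, though note this patch invokes (\ref{hyp2}), which is a hypothesis of Theorem \ref{keythm} rather than of the bare Proposition \ref{propr}.
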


We verify (\ref{reqn}) for distributions $\phi$ satisfying  (\ref{hyp2}). 

\begin{theorem} \label{keythm} Let $a_{\nu}^{(n)}$ be i.i.d. complex random variables with a distribution satisfying  (\ref{hyp2}). Then (\ref{reqn}) holds and hence $\lim_{n\to \infty} E(\tilde Z_{H_n})=dd^cV_{K,Q}$.
\end{theorem}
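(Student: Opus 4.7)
The goal is to establish (\ref{reqn}), i.e., $|I_n(\mathbf{u})| \leq C\log n$ for a constant $C$ independent of $n$ and the unit vector $\mathbf{u}\in\CC^{m_n}$; the conclusion $\lim_n E(\tilde Z_{H_n}) = dd^cV_{K,Q}$ then follows from Proposition \ref{propr}. Writing $W := \langle a^{(n)}, \mathbf{u}\rangle$, I split
$$I_n(\mathbf{u}) = E[\log^+|W|] - E[\log^-|W|].$$
Since $m_n = \dim(\mathcal P_n) = O(n^m)$, it suffices to bound each of $E[\log^+|W|]$ and $E[\log^-|W|]$ by $O(\log m_n)$.

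For $E[\log^+|W|]$, Cauchy--Schwarz yields $|W| \leq \|a^{(n)}\|$, so it suffices to show $E[\log^+\|a^{(n)}\|] = O(\log m_n)$. Integration of the tail bound (\ref{hyp2}) gives $E[|a_1|^\alpha] \leq C_\alpha < \infty$ for every $\alpha < 2$; taking $\alpha = 1$ and using the subadditive bound $\|a^{(n)}\| \leq \sum_j|a_j|$, one has $E[\|a^{(n)}\|] \leq m_n C_1$. Markov's inequality then gives $\Pr\{\|a^{(n)}\|\geq R\} \leq m_n C_1/R$, and substituting into the layer-cake identity
$$E[\log^+\|a^{(n)}\|] = \int_0^\infty \Pr\{\|a^{(n)}\|\geq e^t\}\,dt,$$
splitting at $t_0 = \log(m_nC_1)$, produces $E[\log^+\|a^{(n)}\|]\leq \log(m_nC_1) + 1 = O(\log m_n)$.

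For $E[\log^-|W|]$, apply Fubini to $\log^-|W|\cdot \mathbf{1}_{|W|\leq 1} = \int_{|W|}^1 dt/t$ to get $E[\log^-|W|] = \int_0^1 \Pr\{|W|\leq t\}\,dt/t$, so it suffices to obtain a small-ball estimate of the form $\Pr\{|W|\leq t\} \leq C m_n t^2$. Choose the coordinate $j_0$ of $\mathbf{u}$ of largest modulus (so $|u_{j_0}|\geq 1/\sqrt{m_n}$) and perform the complex-linear change of coordinates from the proof of Lemma \ref{useful2} with new coordinate $\alpha_1 = W$; the Jacobian factor is $1/|u_{j_0}|^2 \leq m_n$. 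Under (\ref{hyp1}) one would bound the remaining density factor $\phi(\cdot)$ pointwise by $T$ and conclude directly; without (\ref{hyp1}), one instead exploits that $W$ is a sum of $m_n$ independent terms $u_j a_j$, so its density $\rho_W$ is an iterated convolution of rescalings of $\phi$, and Young's inequality on this iterated convolution yields the required smoothing. The resulting estimate $\Pr\{|W|\leq t\}\leq Cm_n t^2$ integrates against $dt/t$ on $(0,1)$, splitting at $t_\ast = 1/\sqrt{Cm_n}$, to give $E[\log^-|W|]\leq \tfrac12 + \tfrac12\log(Cm_n) = O(\log m_n)$.

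The principal obstacle is precisely this small-ball estimate under hypothesis (\ref{hyp2}) alone: because $\phi$ need not be bounded, the one-line pointwise argument of Lemma \ref{useful2} is unavailable, and one must extract the required regularity of $\rho_W$ from the convolution of the many independent summands $u_j a_j$ constituting $W$ (possibly via a case analysis on whether a single coordinate $|u_{j_0}|$ dominates or whether several comparable coordinates contribute, the latter case providing the smoothing). Once this is accomplished, combining the two bounds gives $|I_n(\mathbf{u})|\leq C\log n$, completing the verification of (\ref{reqn}) and hence the theorem via Proposition \ref{propr}.
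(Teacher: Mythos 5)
Your decomposition $I_n({\bf u})=E[\log^+|W|]-E[\log^-|W|]$ and your treatment of the positive part are sound and run essentially parallel to the paper: the paper likewise reduces the contribution of $\{\log|W|>(m+1)\log n\}$ to the event $\{\|{\bf a}\|\ge n^k\}$ via $|W|\le\|{\bf a}\|$ and then invokes the tail bound of Corollary \ref{forlater}; your route through $E[|a_1|^{\alpha}]<\infty$ for $\alpha<2$, Markov's inequality, and the layer-cake formula is a mild variant and is correct.

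The gap is exactly where you locate it, and it is not repairable by the device you propose. Under (\ref{hyp2}) alone the small-ball estimate $\Pr\{|W|\le t\}\le Cm_nt^2$ is false: take ${\bf u}=e_1$, so that $W=a_1$ and there is no convolution at all, and take $\phi$ compactly supported (so (\ref{hyp2}) holds trivially) with $\phi(z)\sim c|z|^{-2}(\log(e/|z|))^{-2}$ near $z=0$. This $\phi$ is integrable but $E[\log^-|a_1|]=+\infty$, so $I_n(e_1)=-\infty$ and (\ref{reqn}), which is quantified over all unit vectors ${\bf u}$, fails outright. Young's inequality cannot rescue the many-coordinate case either: $\phi$ is only assumed to lie in $L^1$, and iterated convolution of $L^1$ densities yields no $L^{\infty}$ (or even $L^p$, $p>1$) gain in general. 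The negative part genuinely requires an anti-concentration hypothesis such as (\ref{hyp1}); once (\ref{hyp1}) is granted, your own change-of-variables argument from Lemma \ref{useful2} bounds the density of $W$ pointwise by $Tm_n$, hence $\Pr\{|W|\le t\}\le\pi Tm_nt^2$ and $E[\log^-|W|]=O(\log m_n)$, which completes the proof. You should also be aware that the paper's own proof shares this lacuna: it establishes only the positive-tail estimate (\ref{geqn2}) and asserts that ``a similar argument'' handles $\int_{\{g<-(m+1)\log n\}}g\,dProb_{m_n}$, but the positive-tail argument rests on the decay of $\phi$ at infinity, which says nothing about concentration of $W$ near $0$. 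In short: your identification of the obstacle is correct, your proposed resolution under (\ref{hyp2}) alone does not work (and cannot), and the verification of (\ref{reqn}) appears to require (\ref{hyp1}) in addition to (\ref{hyp2}).
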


\begin{remark}
In the {\it complex} Gaussian case, i.e., where $a_{\nu}^{(n)}$ are i.i.d. complex Gaussians with mean zero and variance one so that
$$dProb_{m_n}(a^{(n)})=\frac{1}{\sqrt{2\pi}^{m_n}}e^{-|a^{(n)}|^2}\prod_{|\nu|\leq n}dm_2(a_{\nu}^{(n)}),$$
we have 
$$I_n({\bf u})=\frac{1}{\sqrt {2\pi}}\int_{\CC}(\log |z|)e^{-|z|^2}dm_2(z)$$
if  $|{\bf u}|=1$ by the unitary invariance of the integral. Thus, in this case, $I_n({\bf u})$ is {\it equal} to a constant independent of $n$ for $|{\bf u}|=1$. 
\end{remark}

\begin{proof} Fix $n$ and write ${\bf a}:=a^{(n)}$. For ${\bf u}$ fixed with $|{\bf u}|=1$ we set $g({\bf a}):=\log |<{\bf a},{\bf u}>|$. Thus we want to show 
\begin{equation}\label{geqn}\Big|\int_{\CC^{m_n}}g({\bf a})dProb_{m_n}({\bf a})\Big|=0(\log n)\end{equation}
independent of ${\bf u}$.

To prove (\ref{geqn}), it suffices to show 
\begin{equation} \label{geqn2} \int_{\{g > (m+1)\log n\}}g({\bf a})dProb_{m_n}({\bf a})=0(1),\end{equation}
i.e., the integral is bounded by a constant $C(m)$ independent of $n$ and ${\bf u}$ for $|{\bf u}|=1$. For a similar argument shows the same is true for 
$$\int_{\{g < -(m+1)\log n\}}g({\bf a})dProb_{m_n}({\bf a}).$$ Then, since $Prob_{m_n}$ is a probability measure,
$$|\int_{\{-(m+1)\log n \leq g \leq (m+1)\log n\}}g({\bf a})dProb_{m_n}({\bf a})|=0(\log n)$$
independent of ${\bf u}$ for $|{\bf u}|=1$ and (\ref{geqn}) follows.

To prove (\ref{geqn2}), from Corollary \ref{forlater}, for $k=m+1,m+2,...,$ we have
\begin{equation} \label{dk1}Prob_{m_n}\{{\bf a}\in \CC^{m_n}: ||{\bf a}||\geq n^k\}\leq T\bigl(\frac{m_n}{n^k}\bigr)^2.\end{equation}
Note that $m_n=0(n^m)$ so $\frac{m_n}{n^k}=0(n^{m-k})$.
We set 
$$d_k=Prob_{m_n}\{{\bf a}: g({\bf a})>k\log n\}.$$
Since $|{\bf u}|=1$, from (\ref{dk1}) we have 
\begin{equation}\label{dk}d_k\leq T(\frac{m_n}{n^k})^2.\end{equation}

 We break up the region $\{g > (m+1)\log n\}$ into the sets 
$$D_k:=\{k\log n < g \leq (k+1)\log n\}, \ k=m+1,m+2,....$$
Then $Prob_{m_n}(D_k)=d_k-d_{k+1},$ and
 $$ \int_{\{g > (m+1)\log n\}}g({\bf a})dProb_{m_n}({\bf a})\leq \sum_{k=m+1}^{\infty} (k+1)\log n (d_k-d_{k+1})$$
$$=\log n\sum_{k=m+1}^{\infty}\bigl( (k+1)(d_k-d_{k+1})\bigr).$$
Now
$$\sum_{k=m+1}^{\infty}\bigl( (k+1)(d_k-d_{k+1})\bigr)=(m+2)d_{m+1} +\sum_{k=m+2}^{\infty} d_k.$$
Using (\ref{dk}), 
$$(m+2)d_{m+1} +\sum_{k=m+2}^{\infty}d_k\leq (m+2)T\bigl(\frac{m_n}{n^{m+1}}\bigr)^2 +\sum_{k=m+2}^{\infty} T\bigl(\frac{m_n}{n^k}\bigr)^2.$$
This gives 
$$\int_{\{g > (m+1)\log n\}}g({\bf a})dProb_{m_n}({\bf a})\leq C\frac{\log n}{n^2}$$
with a constant $C=C(m)$ independent of $n$ and $\bf{u}$ for $|{\bf u}|=1$, verifying (\ref{geqn2}).

\end{proof}

\begin{corollary} \label{expzero} With $a_{\nu}^{(n)}$ being i.i.d. real random variables with real Gaussian distribution $\phi(x)=\frac{1}{\sqrt \pi}e^{-x^2}$, we have 
\begin{equation}\label{expzeroE}\lim_{n\to \infty} E(\tilde Z_{H_n})=dd^cV_{K,Q}.\end{equation}
\end{corollary}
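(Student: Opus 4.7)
The plan is to show that the real Gaussian density $\phi(x) = \frac{1}{\sqrt{\pi}} e^{-x^2}$ satisfies the tail-probability hypothesis (\ref{hyp2}) in its real-variable form, and then to invoke Theorem \ref{keythm} directly.

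First I would verify (\ref{hyp2}). For any $R > 0$,
\begin{equation*}
\int_{|x| \geq R} \frac{1}{\sqrt{\pi}} e^{-x^2} dm_1(x) \leq \frac{2}{\sqrt{\pi}} \int_R^{\infty} \frac{x}{R} e^{-x^2} dx = \frac{1}{R\sqrt{\pi}} e^{-R^2},
\end{equation*}
which for $R$ sufficiently large is dominated by $T/R^2$ for any fixed $T > 0$. So (\ref{hyp2}) holds with $dm_2$ replaced by $dm_1$.

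Next I would observe that the framework of Section \ref{sec:intro} explicitly accommodates real-valued $\phi$, per the remark immediately after (\ref{hyp2}). Under this convention Lemma \ref{useful3} and Corollary \ref{forlater}---whose proofs rely on a coordinate-wise union bound sensitive only to the scalar tail estimate---carry through in the real setting verbatim. In particular
\begin{equation*}
Prob_{m_n}\{{\bf a} \in \RR^{m_n} : \|{\bf a}\| \geq n^k\} \leq T(m_n/n^k)^2.
\end{equation*}
This is the sole probabilistic input to the proof of Theorem \ref{keythm} controlling the large-$g$ tail of $g({\bf a}) = \log |\langle {\bf a}, {\bf u}\rangle|$, so that part of the argument transfers without change.

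The one point that looks like a subtlety---and hence the closest thing to an obstacle---is the bound on the small-$g$ tail, i.e.\ $Prob_{m_n}\{|\langle {\bf a}, {\bf u}\rangle| < 1/n^k\}$, handled in Theorem \ref{keythm} by ``a similar argument.'' For the real Gaussian this is nevertheless trivial: $\phi$ is bounded by $1/\sqrt{\pi}$ (so (\ref{hyp1}) also holds), and a Fubini slicing in the direction of a unit vector ${\bf u}$ produces a bound of order $\sqrt{m_n}/n^k$, which summed against $k \log n$ for $k \geq m+1$ is $o(1)$ since $m_n = O(n^m)$. Combining the two tail bounds yields (\ref{reqn}) with a constant $C = C(m)$ independent of $n$ and of unit ${\bf u}$, and Proposition \ref{propr} then delivers $\lim_{n\to\infty} E(\tilde Z_{H_n}) = dd^c V_{K,Q}$.
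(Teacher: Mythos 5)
Your proposal is correct and follows essentially the same route as the paper: there the corollary is stated with no separate proof, being an immediate consequence of Theorem \ref{keythm} once one notes that the real Gaussian satisfies the tail hypothesis (\ref{hyp2}) in its $dm_1$ form (a point already made after (\ref{hyp2}) in section \ref{sec:intro}). Your extra paragraph checking the small-$|\langle {\bf a},{\bf u}\rangle|$ tail via boundedness of the density usefully fills in the step the paper dispatches with ``a similar argument,'' but it does not constitute a different method.
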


For $k=2,3,...,m$ we use the deterministic result together with the fact that 
$$E(\tilde Z^k_{{\bf H}_n})=E(\frac{1}{n^k}dd^c\log |H_n^{(1)}(z)|\wedge \cdots \wedge dd^c\log |H_n^{(k)}(z)|)$$
$$=E(\tilde Z_{H_n^{(1)}})\wedge \cdots \wedge E(\tilde Z_{H_n^{(k)}})$$
to deduce the following.

\begin{corollary} \label{expzero1} Under the hypotheses of Proposition \ref{propr} (and hence Theorem  \ref{keythm}), we have 
$$\lim_{n\to \infty} E(\tilde Z^k_{{\bf H}_n})=(dd^cV_{K,Q})^k$$
for $k=2,3,...,m$.

\end{corollary}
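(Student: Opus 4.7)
The plan is to combine the independence factorization
$$E(\tilde Z^k_{{\bf H}_n})=E(\tilde Z_{H_n^{(1)}})\wedge\cdots\wedge E(\tilde Z_{H_n^{(k)}})=\bigl(E(\tilde Z_{H_n})\bigr)^{\wedge k}$$
recorded just above the statement with the Bedford--Taylor continuity of the complex Monge--Amp\`ere operator along a locally uniformly convergent sequence of continuous plurisubharmonic potentials. The task then reduces to producing a continuous psh potential $u_n$ with $E(\tilde Z_{H_n})=dd^cu_n$ and $u_n\to V_{K,Q}$ locally uniformly on $\CC^m$.

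Commuting $dd^c$ with the expectation (permissible since $dd^c$ acts distributionally and $u_n$ below is finite) yields such a potential, namely
$$u_n(z):=\frac{1}{n}\int_{\CC^{m_n}}\log|H_n(z)|\,dProb_{m_n}(a^{(n)})=\frac{1}{2n}\log S_n(z,z)+\frac{1}{n}J_n(z),$$
with $J_n(z):=\int\log|\langle a^{(n)},{\bf u}(z)\rangle|\,dProb_{m_n}(a^{(n)})$ and ${\bf u}(z):=p^{(n)}(z)/\sqrt{S_n(z,z)}$ a unit vector, exactly as in the derivation of (\ref{eeqn}). The function $u_n$ is psh as an expectation of the psh family $\frac{1}{n}\log|H_n(\cdot\,;a)|$, and it is continuous because $\frac{1}{2n}\log S_n$ is smooth (since $S_n>0$ everywhere, the constant $1$ being among the orthonormalized monomials) and $J_n$ depends continuously on the unit vector ${\bf u}(z)$ thanks to the bounded-density hypothesis (\ref{hyp1}). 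Next, Proposition~\ref{snfcnprop} combined with the standing continuity of $V_{K,Q}$ gives $\frac{1}{2n}\log S_n\to V_{K,Q}$ uniformly on compact subsets of $\CC^m$, while the bound (\ref{reqn}) from Theorem~\ref{keythm} applied pointwise with ${\bf u}={\bf u}(z)$ yields $|J_n(z)|\le C\log n$ with $C$ independent of $z$, so $\frac{1}{n}J_n\to 0$ uniformly on $\CC^m$. Consequently $u_n$ is continuous psh and $u_n\to V_{K,Q}$ locally uniformly.

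The Bedford--Taylor continuity theorem then finishes the argument: for continuous psh functions on $\CC^m$ converging locally uniformly to a continuous psh limit $u$, one has $(dd^c u_n)^k\to(dd^c u)^k$ weakly as positive currents for every $k\le m$. Applied with $u=V_{K,Q}$ this gives $\bigl(E(\tilde Z_{H_n})\bigr)^{\wedge k}=(dd^cu_n)^k\to(dd^cV_{K,Q})^k$, which combined with the factorization is the desired conclusion. The main technical obstacle I foresee is precisely the upgrade from the $L^1_{loc}$-style control that sufficed for the $k=1$ case of Proposition~\ref{propr} to locally \emph{uniform} convergence of the potentials, which is what allows the Monge--Amp\`ere powers to pass to the limit for $k\ge 2$; this upgrade is handed to us by the fact that the constant $C$ in (\ref{reqn}) is uniform on the unit sphere and so survives the substitution ${\bf u}={\bf u}(z)$.
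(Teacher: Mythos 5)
Your proof is correct and takes essentially the same route as the paper: the paper likewise combines the independence factorization with the deterministic convergence (\ref{maconv}) of the Bergman potentials, the uniform bound (\ref{reqn}) which makes the correction term $\frac{1}{n}J_n$ tend to $0$ uniformly, and Bedford--Taylor continuity of $(dd^c\,\cdot)^k$ along locally uniformly convergent, locally bounded psh potentials. One minor remark: your appeal to (\ref{hyp1}) for continuity of $J_n$ goes beyond the hypotheses of this section (which require only (\ref{hyp2})) and is unnecessary, since local boundedness of $u_n$ --- already guaranteed by (\ref{reqn}) and the smoothness of $\frac{1}{2n}\log S_n$ --- suffices for the Bedford--Taylor step.
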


\section{\bf Open problems.} \label{sec:op}
In this section, we state some open problems.
\begin{enumerate}
\item In the context of random polynomial mappings, under the hypotheses of Theorem \ref{pointmap}, do we have 
$$\lim_{n\to \infty}\bigl[dd^c\bigl(\frac{1}{n}\log ||F_n(z)||\bigr)]^k=\bigl(dd^cV_{K,Q}^*(z)\bigr)^k$$
{\it almost surely in $\mathcal F$} as positive currents for $k=2,...,m$?\\
The case of Gaussian coefficients was done by Shiffman \cite{S}.
\item In the random polynomial mapping setting, if $k=m=2$, it follows from a result of Blocki \cite{Zbig} that if 
\begin{equation}\label{w12}\lim_{n\to \infty}\frac{1}{n}\log ||F_n(z)||=V_{K,Q}^*(z) \ \hbox{in} \ W^{1,2}_{loc}(\CC^2)\end{equation} then  
$$\lim_{n\to \infty}\bigl(dd^c\bigl(\frac{1}{n}\log ||F_n(z)||\bigr)\bigr)^2=\bigl(dd^cV_{K,Q}^*(z)\bigr)^2$$
as positive currents. Do we have (\ref{w12}) {\it almost surely in $\mathcal F$}?
 \item Consider on $\CC^m$ {\it non-random} polynomial mappings, i.e., 
$$F_n:=(H_n^{(1)}(z),...,H_n^{(m)}(z))$$ with
$$  H_n^{(j)}(z):=\sum_{|\nu|\leq n} a_{\nu}^{(n,j)}p_{\nu}^{(n)}(z)$$ and $a_{\nu}^{(n,j)}\in\CC$. Find conditions on the coefficients $a_{\nu}^{(n,j)}$ so that 
$$\lim_{n\to \infty}\bigl[dd^c\bigl(\frac{1}{n}\log ||F_n(z)||\bigr)]^m=\bigl(dd^cV_{K,Q}^*(z)\bigr)^m \ \hbox{weak*},$$
i.e., the normalized counting measure of the common zeros of the $m-$tuples of these polynomials converge to the Monge-Amp\`ere measure of the weighted pluricomplex Green function. For the case $m=1$, see \cite{Bloom}.


\end{enumerate}

\bigskip

\end{document}